\newtheorem{theorem}{Theorem}[section]   %%Theorems numbered within sections
\newtheorem{lemma}[theorem]{Lemma}
\newtheorem{definition}[theorem]{Definition}
\numberwithin{equation}{section}         %%Equations numbered within sections
\def\mb#1{\mathbf{#1}}
\def\L{{\mb L}}
\def\so{\mathsf{1}}
\def\sz{\mathsf{0}}
\renewcommand{\bot}{\mathsf{F}}
\renewcommand{\top}{\mathsf{T}}
\def\c#1{\mathcal{#1}}
\def\LL{\mathcal{L}}
\def\M{\mathcal{M}}         
\def\S{\mathcal{S}}
\def\e{\varepsilon}
\def\ph{\varphi}
\def\abm#1{|#1|^\M}
\def\bcap{\mathop{\textstyle\bigcap}}
\def\bcup{\mathop{\textstyle\bigcup}}
\def\bscap{\mathop{\textstyle\bigsqcap}}
\def\bscup{\mathop{\textstyle\bigsqcup}}
\def\cd{\cdot}
\def\cov{\vartriangleright}
\def\covby{\vartriangleleft}
\def\di#1{\langle #1\rangle}
 \def\emp{\emptyset}
 \def\fu{\mathbin{\otimes}}    %fusion
 \def\jc{j}
 \def\join{\bscup}
 \def\meet{\bscap}
\def\pre{\preccurlyeq}      
\def\sub{\subseteq}
\def\sqleq{\mathrel{\sqsubseteq}}
\def\suc{\succcurlyeq}
 \def\li{\rightarrow_{l}}
\def\ri{\rightarrow_{r}}
\def\lmin{\mathop{-_{l}}}
\def\rmin{\mathop{-_{r}}}
\def\imin{\mathop{-_{i}}}
\def\minus{\mathop{-}}
\def\lneg{\neg_l}
\def\rneg{\neg_r}
\def\lr{\Rightarrow_{l}}
\def\rr{\Rightarrow_{r}}
\def\ir{\Rightarrow_{i}}
\def\sh{\mathop{!}}
\def\qu{\mathop{?}}
\def\qus{{\dot{\qu}}}
\def\up{{\uparrow}}
\def\<{\langle}
\def\>{\rangle}
\def\iff{\quad \text{iff} \quad}
\def\Prop{\mathit{Prop}}
\def\Up{\mathit{Up}}
\begin{document}

\title{Cover Systems for the Modalities of Linear Logic}
\author{Robert Goldblatt\thanks{School of Mathematics and Statistics, Victoria University of Wellington, New Zealand.
{\tt sms.vuw.ac.nz/\~{}rob}}}
\maketitle

%%%%%%  ABSTRACT 

\abstract{
Ono's modal FL-algebras are  models of an extension of Full Lambek logic that has the modalities !  and ? of linear logic. Here we define a notion of modal FL-cover system that combines aspects of Beth-Kripke-Joyal semantics with Girard's interpretation of the ! modality, and has structured subsets that interpret propositions. We
show that any modal FL-algebra can be represented as an algebra of propositions of some modal FL-cover system.}

%%%%%  BODY OF PAPER

\section{Introduction}

Hiroakira Ono pioneered the development of  Kripke-style semantic interpretations of substructural logics, beginning with work on logics that lack the contraction rule \citep{ono:logi85,ono:sema85}.
His fundamental article \citep{ono:sema93} then gave a detailed analysis, involving both algebraic and Kripke-type models, for extensions of \emph{Full Lambek logic} (FL), described roughly as the Gentzen sequent calculus obtained from that for intuitionistic logic by deleting all the structural rules.

Included in this analysis were connectives $\sh$ and $\qu$ corresponding to the \emph{storage} and \emph{consumption} modalities of the linear logic of \citet{gira:line87}. The Kripke models for these in \citep{ono:sema93} were certain relational structures based on semilattice-ordered monoids that carried binary relations to interpret $\sh$ and $\qu$. 

In the present paper we give an alternative modelling of this \emph{modal FL logic} using \emph{cover systems} that are motivated by the topological ideas underlying the Kripke-Joyal semantics for intuitionistic logic in topoi. A cover system assigns to each point certain sets of points called ``covers'' in a way that is formally similar to the neighbourhood semantics of modal logics. Covers are used to give non-classical interpretations of disjunction and existential quantification, and in that sense are also reminiscent of Beth's intuitionistic semantics. The present author has previously developed cover system semantics for the (non-distributive) non-modal FL-logic, as well as for relevant logics and intuitionistic modal logics.\footnote{Bibliographical references for these are given at the end of Section \ref{sec7}.}

Our treatment of the storage modality $\sh$ abstracts from that of the phase space semantics of \cite{gira:line95}, which  is based on commutative monoids with a certain closure operator on its subsets. Propositions are interpreted there as closed subsets, called \emph{facts}, and $\sh X$ is defined to be the least fact including $X\cap I$, where $I$ is the set of all monoid idempotents  that belong to $1$, the least fact containing the monoid identity $\e$. Here, as well as abandoning the commutativity in order to model FL-logic in general, we allow $I$ to be a submonoid of this set of idempotents that forms a cover of $\e$. We also require $I$ to be \emph{central}, i.e.\ its elements commute with all elements. Our models, which are called \emph{modal FL-cover systems},  also have a quasiordering  that is used to interpret the consumption modality $\qu$ by the Kripkean existential clause for a classical $\Diamond$-style modality. But it should be appreciated that in this non-commutative and non-distributive setting, a modality with this existential interpretation need not distribute over disjunction in the way that a classical $\Diamond$ does.

Propositions for us are ``localised up-sets'' that are defined by the cover system structure (see Section \ref{sec3}). We show that the set of propositions of a modal FL-cover system satisfies Ono's axioms for a \emph{modal FL-algebra}. Any order-complete modal FL-algebra is shown to be isomorphic to the algebra of propositions of some modal FL-cover system, while an arbitrary modal FL-algebra can be embedded into the algebra of propositions of a modal FL-cover system by an embedding that preserves any existing joins and meets. We also show that in any FL-algebra with a storage modality $\sh$, the term function $-\sh - a$ defines a modality satisfying the axioms for $\qu$ so gives rise to a modal FL-algebra. Here the two occurrences of $-$ can stand separately for either of the two negation operations that exist in any FL-algebra.

\section{Modalities on Residuated Lattices}      \label{sec2}

A  \emph{residuated partially ordered monoid} (or residuated \emph{pomonoid}), can be defined as an algebra of the form
$$
\L=(L,\sqleq,\fu,1,\lr,\rr),
$$
such that:
\begin{itemize}
\item
$\sqleq$ is a partial ordering on the set $L$.

\item
$(L,\fu,1)$ is a \emph{monoid}, i.e.\  $\fu$ is an associative binary operation (called \emph{fusion}) on $L$, with identity element 1, that is $\sqleq$-monotone in each argument:  $b\sqleq c$ implies $a\fu b\sqleq a\fu c$ and $b\fu a\sqleq c\fu a$. 
\item
 $\lr$ and $\rr$ are binary operations on $L$,\footnote{Notation:  in the literature on residuation,  $a\lr b$ is often written as $b/a$, and $a\rr b$ as $a\backslash b$.}
 called the \emph{left and right  residuals} of $\fu$, satisfying the \emph{residuation law}
 \begin{equation*} 
a\sqleq b\lr c  \iff  a\fu b\sqleq c  \iff   b\sqleq a\rr c.
\end{equation*}
\end{itemize}

A \emph{residuated lattice} is a residuated pomonoid that is a lattice under $\sqleq$, with binary join operation $\sqcup$ and meet operation $\sqcap$. We also write $\join$ and $\meet$ for the join and meet operations on subsets of $L$ when these operations are defined.

 \citet{gala:resi07} give an extensive treatment of the theory of residuated lattices and its application to substructural 
logic.  They define an \emph{FL-algebra} (Full Lambek algebra) to be a residuated lattice with an additional distinguished element 0.
We will mainly deal  with  lattices that are \emph{bounded}, i.e.\ have a
greatest element $\top$ and least element $\bot$. For this it suffices that there be a least element $\bot$, for then there is  a greatest element $\bot\lr\bot=\bot\rr\bot$. 

\begin{definition} \label{def1}
A \emph{storage modality} on a residuated lattice $\L$ is  a unary operation $\sh$ on $L$ such that
\begin{enumerate}[(s1)] 
\item
$\sh a\sqleq a$.
\item
$\sh a\sqleq\sh \sh a$.
\item
$\sh 1=1$.
\item
$\sh(a\sqcap b)=\sh a\fu\sh b$.
\item
$\sh a\fu b=b\fu\sh a$.
\end{enumerate}
\end{definition}

\begin{lemma}  \label{lem1}
Any storage modality satisfies the following.
\begin{enumerate}[\rm(1)]
\item
$\sh a\sqleq 1$.
\item
$\sh$ is monotone, i.e.\ $a\sqleq b$ implies $\sh a\sqleq \sh b$.  
\item
$\sh a=\sh a\fu\sh a$.
\item
$\sh a\fu\sh b=\sh (\sh a\fu\sh b)\sqleq\sh ( a\fu b)$.
\item
$\sh(a\ir b)\sqleq\sh a\ir\sh b$ \quad for $i=l,r$.
\item
$\sh\top=1$ \enspace( if\/ $\top$ exists).

\end{enumerate}
\end{lemma}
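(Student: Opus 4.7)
The plan is to prove the items essentially in the listed order, using (s1)--(s5) plus a couple of easy early consequences. The key early move is that (s4) lets us convert $\sh$ of a meet into a fusion, and together with (s1) it turns any bound on $a \sqcap b$ into a bound on $\sh a \fu \sh b$.

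First I would establish (1): apply (s4) with $b=1$ to get $\sh(a \sqcap 1) = \sh a \fu \sh 1 = \sh a \fu 1 = \sh a$; now (s1) gives $\sh a = \sh(a \sqcap 1) \sqleq a \sqcap 1 \sqleq 1$. With (1) in hand, (2) follows quickly: if $a \sqleq b$ then $a = a \sqcap b$, so (s4) yields $\sh a = \sh a \fu \sh b \sqleq 1 \fu \sh b = \sh b$, where the inequality uses (1) applied to $\sh a$. Item (3) is immediate from (s4) with $a = b$. Item (6) is then a one-liner: $1 = \sh 1 \sqleq \sh \top \sqleq 1$ by (s3), monotonicity (2), and (1).

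For (4) the essential observation is that $\sh$ is idempotent: (s1) applied to $\sh a$ gives $\sh\sh a \sqleq \sh a$, while (s2) gives $\sh a \sqleq \sh \sh a$, hence $\sh\sh a = \sh a$. Now apply (s4) to $\sh a$ and $\sh b$: $\sh(\sh a \sqcap \sh b) = \sh\sh a \fu \sh\sh b = \sh a \fu \sh b$. Applying $\sh$ once more and using idempotence on the left gives $\sh(\sh a \fu \sh b) = \sh\sh(\sh a \sqcap \sh b) = \sh(\sh a \sqcap \sh b) = \sh a \fu \sh b$, which is the equality part of (4). The inequality part is just $\sh a \fu \sh b \sqleq a \fu b$ (by (s1) and monotonicity of $\fu$) followed by the monotonicity of $\sh$ from (2).

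For (5), I would reduce both cases to (4) via the residuation law. For the right residual case, residuation shows $\sh(a \rr b) \sqleq \sh a \rr \sh b$ is equivalent to $\sh a \fu \sh(a \rr b) \sqleq \sh b$; but by (4), $\sh a \fu \sh(a \rr b) = \sh(\sh a \fu \sh(a \rr b)) \sqleq \sh(a \fu (a \rr b)) \sqleq \sh b$, where the last step uses $a \fu (a \rr b) \sqleq b$ (a standard consequence of residuation) together with monotonicity (2). The left-residual case is symmetric, using $(a \lr b) \fu a \sqleq b$ instead. The only place that requires some thought is (4)---in particular realising that $\sh\sh a = \sh a$ and that $\sh a \fu \sh b$ therefore lies in the image of $\sh$---since the rest is then a mechanical consequence of monotonicity and residuation; I do not expect any serious obstacle.
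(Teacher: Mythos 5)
Your proposal is correct and follows essentially the same route as the paper's own proof: (1) via $\sh a=\sh(a\sqcap 1)$, (2) via $\sh a=\sh a\fu\sh b\sqleq 1\fu\sh b$, (3) from (s4) with $a=b$, (4) from idempotence of $\sh$ combined with (s4), and (5) by residuation from (4) and the law $a\fu(a\rr b)\sqleq b$. The only (immaterial) difference is that in (4) you apply (s4) to $\sh a,\sh b$ after noting $\sh\sh a=\sh a$, whereas the paper applies $\sh(a\sqcap b)=\sh\sh(a\sqcap b)$ and rewrites both sides with (s4).
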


\begin{proof}
\begin{enumerate}[\rm(1)]
\item
Using (s3), (s4) and then (s1), with lattice properties, we observe
$$
\sh a=\sh a\fu 1=\sh a\fu \sh 1=\sh(a\sqcap 1)\sqleq a\sqcap 1\sqleq 1.
$$
\item
If $a\sqleq b$, then $\sh a=\sh(a\sqcap b)=\sh a\fu\sh b$ by (s4). But by (1) and monotonicity of $\fu$,
we get
$\sh a\fu\sh b\sqleq1\fu\sh b=\sh b$.
\item
Put $a=b$ in (s4).
\item
By (s1) and (s2), $\sh(a\sqcap b)=\sh\sh(a\sqcap b)$. This becomes
$\sh a\fu\sh b=\sh (\sh a\fu\sh b)$  by (s4). But $\sh a\fu\sh b\sqleq a\fu b$ by (s1) and monotonicity of $\fu$.
So $\sh (\sh a\fu\sh b)\sqleq\sh ( a\fu b)$ by (2).
\item
$(a\lr b)\fu a\sqleq b$, so $\sh((a\lr b)\fu a)\sqleq \sh b$ by (2). Then using (4),
$\sh(a\lr b)\fu \sh a\sqleq \sh((a\lr b)\fu a)\sqleq \sh b$, hence 
$\sh(a\lr b)\sqleq\sh a\lr\sh b$.

The case of $\rr$ is similar, using $a\fu(a\rr b)\sqleq b$.
\item
$\sh\top\sqleq 1$ by (1). But $1=\sh 1\sqleq\sh\top$ by (s3) and (2).
\qed
\end{enumerate}
\end{proof}

The axioms (s1)--(s5) form part of  Ono's definition of a modal FL-algebra, which we come to shortly. Note that (s5) was not used at all in Lemma \ref{lem1}.

\citet{troe:lect92} deals with \emph{IL-algebras} (\emph{intuitionistic linear} algebras), which are essentially bounded residuated lattices in which $\fu$ is commutative, and hence $\lr$ and $\rr$ are identical. He defines an \emph{ILS-algebra} (intuitionistic linear algebra with \emph{storage}), to be  an IL-algebra with a unary operator $\sh$ having $\sh a\sqleq a$; $\sh a\sqleq b$ only if $\sh a\sqleq \sh b$; $\sh\top=1$; and $\sh(a\sqcap b)=\sh a\fu\sh b$. These conditions together are equivalent to (s1)--(s4) in any IL-algebra, which automatically satisfies (s5) because it has commutative $\fu$.

Another equivalent definition of ILS-algebra is given by \citet[Def.~3.5]{buca:moda94}.  It has $\sh\top\sqleq 1$ and $1\sqleq\sh 1$ in place of (s3), and $\sh a\sqleq\sh a\fu\sh a$ and $\sh a\fu\sh b\sqleq\sh (\sh a\fu\sh b)$  in place of (s4).

The following notion was introduced in \citep[Definition 6.1]{ono:sema93}.
\begin{definition} \label{def2}
A \emph{modal FL-algebra} is a bounded FL-algebra with a storage modality $\sh$ and an additional unary operation $\qu$ satisfying
\begin{enumerate}
\item[(c1)]
$\sh(a\ir b)\sqleq\qu a\ir\qu b$ \quad for $i=l,r$.
\item[(c2)]
$ a\sqleq \qu a$.
\item[(c3)]
$\qu\qu a\sqleq\qu  a$
\item[(c4)]
$\qu 0\sqleq 0$.
\item[(c5)]
$ 0\sqleq \qu a$.
\end{enumerate} 
\end{definition}

\begin{lemma}  \label{lem2}
In any modal FL-algebra, $\qu$ is a monotone operation satisfying 
$$
\emph{(c6)}\ \ \sh a\fu\qu b\sqleq\qu(a\fu b) \quad\text{and}\quad  \emph{(c7)}\ \ \qu a\fu\sh b\sqleq\qu(a\fu b).
$$
\end{lemma}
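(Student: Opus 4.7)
The plan is to get all three statements by combining axiom (c1) with residuation and the monotonicity of $\sh$ from Lemma~\ref{lem1}(2). The key observation is that an inequality $x \sqleq y$ is the same as $1 \sqleq x \ir y$ by residuation, and since $\sh 1 = 1$ by (s3), we can feed such inequalities through (c1) to produce inequalities about $\qu$.

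First I would prove monotonicity of $\qu$. Suppose $a \sqleq b$. By residuation, $1 \sqleq a \rr b$. Applying $\sh$ and using $\sh 1 = 1$ (s3) together with Lemma~\ref{lem1}(2), we get $1 \sqleq \sh(a \rr b)$. Then (c1) with $i = r$ gives $\sh(a \rr b) \sqleq \qu a \rr \qu b$, so $1 \sqleq \qu a \rr \qu b$, which by residuation is $\qu a \sqleq \qu b$.

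Next, for (c6) and (c7) the strategy is to isolate one factor in $a \fu b$ using residuation, then push it through $\sh$ and (c1). For (c6) I would start from $a \fu b \sqleq a \fu b$ which, by residuation on the left, says $a \sqleq b \lr (a \fu b)$. Monotonicity of $\sh$ yields $\sh a \sqleq \sh(b \lr (a \fu b))$, and (c1) with $i = l$ gives $\sh(b \lr (a \fu b)) \sqleq \qu b \lr \qu(a \fu b)$. Chaining and then undoing the residuation produces $\sh a \fu \qu b \sqleq \qu(a \fu b)$. For (c7) the same argument works symmetrically starting from $b \sqleq a \rr (a \fu b)$, applying $\sh$ and (c1) with $i = r$, and then using right-residuation to read off $\qu a \fu \sh b \sqleq \qu(a \fu b)$.

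There is no real obstacle here; the main thing to get right is the direction of residuation at each step, since the non-commutativity means $\lr$ and $\rr$ are not interchangeable and one must match the $i = l$ versus $i = r$ instance of (c1) to the correct residual. The proof uses axioms (s3) (for $\sh 1 = 1$), (c1), Lemma~\ref{lem1}(2), and residuation only; axioms (s1), (s2), (s4), (s5), (c2)--(c5) are not needed.
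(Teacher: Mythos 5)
Your proposal is correct and follows essentially the same route as the paper: derive monotonicity of $\qu$ by pushing $1\sqleq a\ir b$ through $\sh 1=1$ and (c1), and obtain (c6)/(c7) from $a\sqleq b\lr (a\fu b)$ and $b\sqleq a\rr (a\fu b)$ via $\sh$-monotonicity, (c1), and residuation. The only (immaterial) difference is that for monotonicity you use the $i=r$ instance of (c1) where the paper uses $i=l$.
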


\begin{proof}
We have $(a\lr b)\fu a\sqleq b$. Now let  $a\sqleq b$. Then $1\fu a\sqleq b$, so$1\sqleq a\lr b$ and hence
by (s3) and monotonicity of $\sh$,
$$
1=\sh 1\sqleq\sh(a\lr b)\sqleq  \qu a\lr\qu b,
$$
with the last inequality given by (c1) with $i=l$. Thus $\qu a=1\fu\qu a\sqleq\qu b$, establishing that $\qu$ is monotone.

For (c6), since $a\sqleq b\lr a\fu b$, we get $\sh a\sqleq \sh(b\lr a\fu b))\sqleq \qu b\lr\qu(a\fu b)$ using (c1) for $i=l$.
Hence $\sh a\fu\qu b\sqleq\qu(a\fu b)$. The proof of (c7) is similar, using $b\sqleq a\rr a\fu b$ and (c1) for $i=r$.
\qed
\end{proof}
The conditions on $\qu$ in this Lemma are equivalent to (c1) in any FL-algebra. In fact more strongly:

\begin{lemma}
Let\/ $\L$ be a residuated pomonoid with a monotone operation $\sh$ having $\sh 1=1$. Then a unary operation $\qu$ on $\L$ satisfies (c1) if, and only if it is monotone and satisfies (c6) and (c7).
\end{lemma}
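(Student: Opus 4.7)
The plan is to exploit residuation in both directions. For the forward implication, observe that the argument already given for Lemma \ref{lem2} uses only the residuated pomonoid structure, the monotonicity of $\sh$, and the identity $\sh 1=1$; no lattice operations or distinguished $0$ are invoked. So that proof transfers verbatim to show that (c1) entails monotonicity of $\qu$ together with (c6) and (c7) under the present weaker hypotheses, and I would simply say so rather than rewrite it.

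For the converse, assume $\qu$ is monotone and satisfies (c6) and (c7); I would derive (c1) by a short residuation argument for each choice of $i$. For $i=l$, by the residuation law the inequality $\sh(a\lr b)\sqleq\qu a\lr\qu b$ is equivalent to $\sh(a\lr b)\fu\qu a\sqleq\qu b$. The hypothesis (c6) gives
$$
\sh(a\lr b)\fu\qu a\sqleq \qu((a\lr b)\fu a),
$$
and since $(a\lr b)\fu a\sqleq b$ is the canonical cancellation inequality for the left residual, monotonicity of $\qu$ yields $\qu((a\lr b)\fu a)\sqleq\qu b$; the desired inequality follows by transitivity. The case $i=r$ is symmetric: residuation turns (c1) into $\qu a\fu\sh(a\rr b)\sqleq\qu b$, which follows by applying (c7) and then monotonicity of $\qu$ to the cancellation inequality $a\fu(a\rr b)\sqleq b$.

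There is no real obstacle here: both directions boil down to the same move, namely converting inequalities involving residuals into inequalities involving $\fu$ via the residuation law. The only substantive observation is that the proof of Lemma \ref{lem2} did not rely on lattice structure or the $0$ constant, so it applies unchanged at this greater level of generality.
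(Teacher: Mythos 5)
Your proposal is correct and follows essentially the same route as the paper: the forward direction is delegated to the proof of Lemma \ref{lem2} (which, as you rightly note, uses only the residuated pomonoid structure, $\sh$-monotonicity and $\sh 1=1$), and the converse is obtained by applying (c6) (resp.\ (c7)) followed by $\qu$-monotonicity to the cancellation inequality $(a\lr b)\fu a\sqleq b$ (resp.\ $a\fu(a\rr b)\sqleq b$) and then invoking residuation. No issues.
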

\begin{proof}
The only-if part is shown by the proof of the last Lemma. Conversely, assume $\qu$ is monotone and satisfies (c6) and (c7).
By (c6) and then $\qu$-monotonicity,
$$
\sh(a\lr b)\fu\qu a\sqleq\qu((a\lr b)\fu a)\sqleq \qu b
$$
from which residuation gives (c1) for $i=l$. The case of $i=r$ is similar, using (c7).
\qed
\end{proof}

Any modal FL-algebra $\L$ can be embedded into an order-complete modal FL-algebra by an embedding that preserves any joins and meets that exist in $\L$. This was shown in Section 4 of \citep{ono:sema93}, by an extension of the MacNeille completion construction. We make use of the result below in representing modal FL-algebras over cover systems.

\section{Cover Systems}  \label{sec3}

FL-algebras will be represented as algebras of subsets of structures of the form $\S=(S,\pre,\covby,\dots )$, in which $\pre$ is a preorder (i.e.\ reflexive  transitive relation) on $S$, and $\covby$ is a binary relation from $S$ to its powerset $\c PS$.
We sometimes write $y\suc x$ when $x\pre y$, and say that $y$ \emph{refines} $x$.
 When $x\covby C$, where $x\in S$ and $C\sub S$, we say that $x$ \emph{is covered by} $C$, and write this also as $C\cov x$, saying that $C$ \emph{covers} $x$ or that $C$ is an \emph{$x$-cover}. 

An \emph{up-set} is a subset  $X$ of $S$ that is closed upwardly under refinement: $y\suc x\in X$ implies $y\in X$.   For an arbitrary $X\sub S$,  
$$
\up X=\{y\in S: (\exists x\in X)\, x\pre y\}
$$
is the smallest up-set including $X$. For $x\in S$, we write $\up x$ for $\up\{x\}=\{y:x\pre y\}$, 
 the smallest up-set containing $x$. 
 
 The collection  $\Up(\c S)$ of all up-sets of $\c S$ is a complete poset under the partial order $\sub$ of set inclusion, with the join $\bscup\c X$ and meet $\bscap\c X$ of any collection $\c X$ of up-sets being its set union  
$\bcup\c X$ and intersection $\bcap\c X$ respectively, while  $\bot=\emp$ and $\top=S$.

A subset $Y$ of $S$ \emph{refines} a subset $X$ if $Y\sub\up X$, i.e.\ if every member of $Y$ refines some member of $X$.
We call $\S$  a \emph{cover system} if it satisfies the following axioms, for all $x\in S$:

\begin{itemize}
\item
\emph {Existence}:   there exists an $x$-cover $C\sub\up x$;
\item 
\emph{Transitivity}:
if $x\covby C$ and for all $y\in C$, $y\covby C_y$, then $x\covby \bigcup_{y\in C }C_y$.
\item
\emph{Refinement}:
if $x\pre y$, then every $x$-cover  can be refined to a $y$-cover, i.e.\ if $C\cov x$, then there exists a $C'\cov y$ with $C'\sub\up C$.
\end{itemize}
For each subset  $X$ of $S$, define
\begin{equation}                                        \label{eq:jcov}
\jc X=\{x\in S: \exists C\,(x\covby C\sub X)\}.
\end{equation}
A property is thought of as being \emph{locally true} of $x$ if $x$ is covered by a set of members that have this property, i.e.\ if there is some $C$ such that $x\covby C$ and each member of $C$ has the property.
In this sense, $x$ belongs to $ \jc X$ just when the property of \emph{being a member of} $X$ is locally true of $x$. So  
$\jc X$ can be thought of as the collection of ``local members'' of $X$. 
$X$ is called \emph{localised} if $\jc X\sub X$, i.e.\ if every local member of $X$ is an actual member of $X$. 

It was shown in  \cite[Theorem 5]{gold:krip06} and  \cite[Lemma 3.3]{gold:cove11} that  in any cover system, the function $j$ defined by \eqref{eq:jcov} is a closure operator on the complete poset $(\Up(\S),\sub)$ of up-sets, i.e.\ $j$ is monotonic and has $X\sub jX=j(jX)$.

A \emph{proposition} in a cover system is an up-set $X$ that is localised, i.e.\  $jX\sub X$, hence $jX=X$.   In general, a set $X$ is a proposition iff $X=\up X=jX$.   $j\up X$ is the smallest proposition that includes an arbitrary $X$, and $j\up x$ is the smallest proposition containing the element $x$. The smallest proposition including an up-set $X$ is just $jX$, so in fact $j$ maps $\Up(\c S)$ onto the set $\Prop(\c S)$  of all localised up-sets of a cover system $\S$. Indeed,  $\Prop(\c S)$ is precisely the set of fixed points of this map.

Requiring propositions to be localised amounts to making truth a property of \emph{local character}, i.e. it holds whenever it does so locally. For further discussion of this see  \cite{gold:cove11}, or \cite[Section 6.3]{gold:quan11} where an information-theoretic interpretation of the cover relation $\covby$ is also given.

Our cover systems have some formal similarities with the notion of a pretopology of  \citet{samb:intu89}, but there are some basic differences, including the presence here  of the preorder $\pre$,  and the absence  of Sambin's reflexivity condition that $x\covby C$ whenever $x\in C$. Our systems are motivated by the topological ideas underlying the Beth-Kripke-Joyal semantics for  logic in sheaf categories \citep{macl:shea92}, and  relate more to the \emph{cover schemes} on preordered sets of \citep{bell:cove06}. 

\citet[p.~72]{drag:math88} gave a method of constructing closure operators over preordered sets  that  is  motivated by the features of Beth's models. He defined an operation $\mathbf{D}$ on \emph{down}-sets of a preorder by taking a function $Q$ assigning to each $x\in S$ a collection $Q(x)$ of subsets of $S$, and putting
$
\mathbf{D}Y=\{x\in S:  \forall C\in Q(x), C \cap Y\ne\emptyset\}.
$
He gave conditions on $Q$  ensuring that \textbf{D} is a closure operator, and interpreted $C\in Q(x)$ by saying that `$C$ is a path starting from the moment $x$'. Now if we define $x\covby C$ to mean $C\in Q(x)$, then for any up-set $X$ it follows that  $S\setminus X$ is a down-set and $j_\covby X=S\setminus(\mathbf{D}(S\setminus X)$, so  in this sense Dr\'{a}galin's approach is dual to that of cover systems.  \cite{bezh:loca16} give  a detailed  discussion of the relationship between these
approaches.

Every topological space has the cover system in which $S$ is the set of open subsets of the space, with $x\pre y$ iff $x\supseteq y$ and $x\covby C$ iff $x=\bigcup C$. This system has the property that every $x$-cover is included in $\up x$, as do  the cover schemes of \citep{bell:cove06}. But this property makes  $\Prop(\c S)$ into a distributive lattice. Indeed even the weaker constraint that every $x$-cover \emph{can be refined to} an $x$-cover included in $\up x$ is enough to force 
$\Prop(\c S)$ to be a complete Heyting algebra \cite[Theorem 3.5]{gold:cove11}, and hence a model of intuitionistic logic. Since we are interested in non-distributive residuated lattices, any such constraint must be abandoned.

\section{Residuated Cover Systems}
To make $\Prop(\S)$ into a  residuated pomonoid we add a  a binary operation $\cd$ on $S$, which  will also be called \emph{fusion} (hopefully without causing confusion). This is lifted to a $\sub$-monotone binary operation on subsets of $S$ by putting, for $X,Y\sub S$,
$$
X\cd Y=\{x\cd y: x\in X\text{ and }y\in Y\}.
$$
We write $x\cdot Y$ for the set  $\{x\}\cd Y$, and $X\cd y$ for  $X\cd \{y\}$.

Define operations $\lr$ and $\rr$ on subsets of $S$  by
\begin{equation}  \label{defimp}
X\lr Y =\{z\in S: z\cd X\sub Y\}, \quad X\rr Y =\{z\in S: X\cd z\sub Y\}.
\end{equation}
These provide left and right residuals to the fusion operation on the complete poset $(\c P S,\sub)$, 
 i.e.\ for all $X,Y,Z\sub S$ we have
\begin{equation}   \label{residfu}
X\sub Y\lr Z  \iff  X\cd Y\sub Z \iff  Y\sub X\rr Z.
\end{equation}

Next define $X\circ Y$ to be the up-set $\up(X\cd Y)$ generated by $X\cd Y$. Then \emph{if $Z$ is an up-set}, we have 
$X\cd Y\sub Z$ iff  $X\circ Y\sub Z$, and hence \eqref{residfu} implies
\begin{equation}   \label{residcirc}
X\sub Y\lr Z  \iff  X\circ Y\sub Z \iff  Y\sub X\rr Z
\end{equation}
for any $X$ and $Y$. If the fusion operation $\cd$ is $\pre$-monotone in each argument, then $Y\lr Z $ and 
$X\rr Z$ are up-sets when $Z$ is an up-set. In particular, this implies that $\Up(\S)$ is closed under $\lr$ and $\rr$, and so by \eqref{residcirc}, these operations are  left and right residuals to $\circ$ on $\Up(\S)$. 

A \emph{residuated cover system} was defined in \citep{gold:grish11}  to be a structure of the form
$$
\S=(S,\pre,\covby,\cd,\e ),
$$
such that:
\begin{itemize}
\item 
$(S,\pre,\covby)$ is a cover system.
\item
$(S,\cd,\e )$ is a pomonoid, i.e.\ $\cd$ is an associative operation on $S$ that is $\pre$-monotone in each argument, and has $\e\in S$ as identity.
\item
\emph{Fusion preserves covering}: $x\covby C$ implies $x\cdot y\covby C\cdot y$ and $y\cdot x\covby y\cdot C$.
\item
\emph{Refinement of $\e$ is local}: $x\covby C\sub\up\e$ implies $\e\pre x$.
\end{itemize}
The last condition states that if $x$ locally refines $\e$, in the sense that it has a cover consisting of points refining $\e$, then $x$ itself refines $\e$. This means that the up-set $\up\e$ of points refining $\e$ is localised, i.e.\ $j\up\e\sub\up\e$, and therefore is a proposition. The condition that fusion preserves covering implies, more strongly, that
\begin{equation}  \label{covfu}
\text{if $x\covby C$ and $y\covby D$, then $x\cd y\covby C\cd D$.}
\end{equation} 
This was shown in     \citep{gold:grish11}, where the following was also established:

\begin{theorem}  \label{propS}
The set $\Prop(\S)$ of propositions of a residuated cover system $\S$ forms a complete residuated lattice under a monoidal operation $\fu$ with  identity\/  $1$, where
\begin{align*}
  X\fu Y       &= j(X\circ Y) = j\up (X\cd Y) \\
   1	&=\up\e \\
  {\textstyle \bigsqcap}\mathcal{X} &=  {\textstyle\bigcap}\mathcal{X} \\
   {\textstyle\bigsqcup}\mathcal{X} & =  j({\textstyle\bigcup}\mathcal{X})\\ 
  X\lr Y      & =\{z\in S:z\cd X\sub Y\} \\
  X\rr Y     &= \{z\in S:X\cd z\sub Y\} \\
   \top            &= S  \\
   \bot             &= j\emptyset  = \{x: x\covby\emptyset  \}. 
\end{align*}
\qed
\end{theorem}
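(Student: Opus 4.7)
The overall plan is to treat the theorem as a verification that $\Prop(\S)$, viewed as the set of fixed points of the closure operator $j$ on $\Up(\S)$, inherits all the required structure from $(\c PS,\sub,\cd,\e)$, with $j$ mediating the passage back into $\Prop(\S)$. Since $j$ is a closure operator, the general theory of closure systems gives immediately that $\Prop(\S)$ is a complete lattice in which arbitrary meets are intersections and arbitrary joins are obtained by applying $j$ to unions. That establishes the formulas for $\bscap\c X$, $\bscup\c X$, and also $\top=S$ (which is trivially a proposition).

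Next I would verify that the three distinguished constants and the two residuals produce propositions. For $\bot=j\emptyset$ this is by definition of $j$ and the fact that $j$ maps into $\Prop(\S)$; the equality $j\emptyset=\{x:x\covby\emptyset\}$ is immediate from \eqref{eq:jcov}. For $1=\up\e$, I invoke the axiom ``refinement of $\e$ is local'': any $x\in j\up\e$ has a cover included in $\up\e$, and that axiom delivers $\e\pre x$, so $\up\e$ is localised. The hardest part of this bookkeeping is showing $X\lr Y$ and $X\rr Y$ are propositions when $Y$ is. That they are up-sets uses $\pre$-monotonicity of $\cd$; to show they are localised, suppose $x\in j(X\lr Y)$ via $x\covby C\sub X\lr Y$. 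For any $a\in X$, the axiom ``fusion preserves covering'' gives $x\cd a\covby C\cd a$, and for each $c\in C$ we have $c\cd a\in Y$, so $C\cd a\sub Y$; thus $x\cd a\in jY=Y$, showing $x\in X\lr Y$. The case of $\rr$ is symmetric.

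To get the monoid structure on $\Prop(\S)$ with identity $1=\up\e$, I would first check $X\fu 1=X$ for any proposition $X$. The inclusion $X\sub X\fu 1$ uses $\e\in\up\e$ and $X\cd\e=X$ together with $X=jX$. For $X\fu 1\sub X$, since $X$ is localised it suffices to show $X\circ\up\e\sub X$; but for $x\in X$ and $y\suc\e$, monotonicity of $\cd$ gives $x\cd y\suc x\cd\e=x\in X$, and $X$ is an up-set. Symmetrically $1\fu X=X$. For associativity I would first note, using \eqref{covfu} and the fact that $j$ is a closure operator, the identity $j(jU\circ V)=j(U\circ V)=j(U\circ jV)$ for any up-sets $U,V$; iterating gives $(X\fu Y)\fu Z=j(X\circ Y\circ Z)=X\fu(Y\fu Z)$ from associativity of $\cd$.

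Finally, the residuation law on $\Prop(\S)$ follows quickly from \eqref{residcirc}: for propositions $X,Y,Z$, $X\fu Y\sub Z$ iff $j(X\circ Y)\sub Z$ iff $X\circ Y\sub Z$ (using $Z=jZ$), and \eqref{residcirc} converts this to $X\sub Y\lr Z$ and to $Y\sub X\rr Z$. The main obstacle in the whole argument is, as noted, verifying that the residuals $X\lr Y$ and $X\rr Y$ are localised — this is the only point where the axioms on $\covby$ beyond the closure-operator properties of $j$ are really used, and it is precisely the role of the ``fusion preserves covering'' axiom to make that step go through. Everything else is either a formal consequence of $j$ being a closure operator, or a routine transfer from $(\c PS,\cd)$ to $\Prop(\S)$ via $j\up(-)$.
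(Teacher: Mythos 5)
Your proof is correct and proceeds exactly along the lines the paper sets up (the paper itself gives no proof of Theorem~\ref{propS}, deferring to \citep{gold:grish11}): the fixed points of the closure operator $j$ on $\Up(\S)$ form a complete lattice with $\bigsqcap=\bigcap$ and $\bigsqcup=j\circ\bigcup$, the residuals are up-sets by $\pre$-monotonicity and localised by fusion-preserves-covering, $\up\e$ is localised by the ``refinement of $\e$ is local'' axiom, and the residuation law transfers through \eqref{residcirc} since $j(X\circ Y)\sub Z$ iff $X\circ Y\sub Z$ for localised $Z$. The only quibble is your closing claim that localisation of the residuals is the sole point where the covering axioms enter beyond the closure-operator properties of $j$: the identity $j(jU\circ V)=j(U\circ V)=j(U\circ jV)$ underlying associativity also requires fusion to preserve covering, as you yourself acknowledge when you invoke \eqref{covfu} there, so this is a slight overstatement rather than a gap.
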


\section{Modal FL-Cover Systems}

The linear logic semantics of \citet{gira:line87,gira:line95} uses the notion of a phase space, a certain kind of structure based on a commutative monoid $(M,\cd,\e)$. There is a closure system on the set of subsets of $M$, and formulas are interpreted as closed sets, which are called \emph{facts}. There is a monoidal structure on the set of facts in which $X\fu Y$ is the least fact including $X\cd Y$, and the identity $1$ of $\fu$ is the least fact containing $\e$.

The storage modality was interpreted in  \citep{gira:line95} as an operation $\sh$ on the algebra of facts that defines $\sh X$ to be the least fact including $X\cap I$, where $I$ is the submonoid of $M$ consisting of all elements $x$ of $1$ that are idempotent, i.e. $x\cd x=x$.
This approach was generalized by \citet{lafo:fini97}, replacing $I$ by the larger submonoid $J(M)$ of all elements $x\in 1$ such that $x$ belongs to the closure of $x\cd x$, and defining $\sh X$ to be the least fact including $X\cap K$, where $K$ is some designated submonoid of $J(M)$. That definition was then taken up  by \citet{okad:fini99}.

Here we will use a mix of these ideas to define storage modalities on the algebra  $\Prop(\S)$ of propositions (=localized up-sets) of a residuated cover system $\S$. In that context, the least proposition including a set $X$ is $j\up X$, and we will define $\sh X$ to be $j\up(X\cap I)$, where $I$ is a designated subset of $\up\e$ (=1) that is a cover of $\e$ and a submonoid of $(S,\cd,\e )$, as well as  consisting of idempotents that commute with all members of $S$. But whereas $\sh$ and $\qu$ are interdefinable in linear logic, here we treat them as independent, and model $\qu$ by the same interpretation that Kripke gave to the classical modality $\Diamond$. We take a binary relation $R$ on $S$ and let $\qu X$ be the set 
\begin{equation}\label{defdi}
\<R\>X=\{x\in S:\exists y(xRy\in X)\}.
\end{equation}
In \citep{gold:krip06,gold:cove11} we showed that any monotone operation on $\Prop(\S)$ can be given such a modelling, provided the cover system interacts in specified ways with the relation $R$.

\begin{definition}  \label{def3}
A \emph{modal FL-cover system} is a structure
$$
\S=(S,\pre,\covby,\cd,\e, 0,I,R ),
$$
with $0\in\Prop(\S)$, $I\sub \up\e$ and $R\sub S\times S$, such that:
\begin{itemize}
\item 
$(S,\pre,\covby,\cd,\e)$ is a residuated cover system.
\item
$I$ is a submonoid of $(S,\cd,\e)$, i.e.\ $I$ is closed under $\cd$ and contains $\e$.
\item
$I$ is an $\e$-cover:  $\e\covby I$.
\item
\emph{$I$ is idempotent and central}: if $x\in I$, then $x=x\cd x$ and $x\cd y=y\cd x$ for all $y\in S$.
\item 
\emph{$\pre$ and $R$ are confluent}: if $x\pre y$ and $xRz$, then there exists $w$ with $z\pre w$ and $yRw$;
\item
\emph{Modal Localisation}: if there exists an $x$-cover included in $\di{R}X$, then there exists a $y$ with $xRy$
and a $y$-cover included in $X$.
\item
\emph{$R$-Monotonicity}: If $x\in I$ and $yRz$, then $x\cd yRx\cd z$ and $y\cd xRz\cd x$.
\item
$R$ is reflexive and transitive.
\item
$xRy\in 0$ implies $x\in 0$.
\item$x\in 0$ implies that for some $y$, $xRy\covby\emp$.
\end{itemize}
\end{definition}
This definition looks formidable but has a certain inevitability. Its conditions are those that are needed to show that the lattice  of propositions of $\S$ is a complete modal FL-algebra. The details of how this works are given in the proof of the next theorem, but first we give a summary. We have already observed that $\Prop(\S)$ is a complete residuated lattice when $\S$ is based on a residuated cover system. The specified proposition 0 then serves as the distinguished element making $\Prop(\S)$ into an FL-algebra. The listed conditions on $I$ are used\footnote{Except for the condition $\e\in I$: see the  note at the end of this section.} to show that the operation $\sh X=j\up(X\cap I)$ is a storage modality on $\Prop(\S)$, i.e.\ satisfies (s1)--(s5) of Definition \ref{def1}. In particular, while (s1) and (s2) hold for any $I$, the proof of (s3) uses both $I\sub \up\e$ and
$\e\covby I$; that of (s4) uses that $I$ is  closed under $\cd$ and idempotent; and (s5) uses that $I$ is central.
The listed conditions on $R$ are used to show that the operation
 $\qu X=\<R\>X$ satisfies (c1)--(c5) of Definition \ref{def2}. The confluence of $\pre$ and $R$ and Modal Localisation together ensure that $\<R\>X$ belongs to $\Prop(S)$ whenever $X$ does; the proof of (c1) uses $R$-monotonicity; those of (c2) and (c3) use reflexivity and transitivity of $R$, respectively; and the last two conditions on $R$ are used for (c4) and (c5) respectively.

We turn now to the details.

\begin{theorem}
If $\S=(S,\pre,\covby,\cd,\e, 0,I,R )$ is a modal FL-cover system, then
$$
\L_\S=(\Prop(\S), \sub,\fu,1,\lr,\rr,0,\sh,\qu),
$$
is a complete modal FL-algebra, where the operations $\fu,1,\lr,\rr$ are as given in Theorem \ref{propS}; the storage modality is defined by $\sh X=j\up(X\cap I)$; and the consumption modality is given by $\qu X=\di{R}X$ as defined in \eqref{defdi}.
\end{theorem}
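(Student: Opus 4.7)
The plan is to verify the defining conditions of a complete modal FL-algebra one by one, leaning on Theorem \ref{propS} for the underlying bounded residuated lattice on $\Prop(\S)$. The work splits into three parts: showing that $\sh$ and $\qu$ map $\Prop(\S)$ into itself, checking the storage axioms (s1)--(s5), and checking the consumption axioms (c1)--(c5). Each hypothesis in Definition \ref{def3} is used in exactly one of these checks, roughly in the order listed.

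For preservation of $\Prop(\S)$, the case $\sh X = j\up(X \cap I)$ is automatic, since any $j\up Y$ is a proposition. For $\qu X = \di{R} X$, I would first show it is an up-set using confluence of $\pre$ with $R$ (from $x \pre y$ and $xRz \in X$, pick $w$ with $yRw$ and $z \pre w$; then $w \in X$ since $X$ is an up-set), and then invoke Modal Localisation to obtain $j \qu X \subseteq \qu X$.

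For the storage axioms, (s1) reduces to $X \cap I \subseteq X$ once $X$ is a proposition; (s2) follows after noting that $X \cap I \subseteq \sh X$ (so $X \cap I \subseteq \sh X \cap I$ and hence $\sh X \subseteq \sh\sh X$); and (s3) uses $I \subseteq \up\e$ together with the cover $\e \covby I$ to put $\e \in jI \subseteq \sh 1$, upward-closure giving $1 \subseteq \sh 1$ and (s1) the reverse. The substantive step is (s4), which I would split into two inclusions: one uses idempotence $a = a \cd a$ for $a \in I$ to show $X \cap Y \cap I \subseteq (X \cap I) \cd (Y \cap I)$; the other uses $I \subseteq \up\e$ with $\cd$-monotonicity to show $(X \cap I) \cd (Y \cap I) \subseteq (X \cap Y) \cap I$; both are then propagated through $j\up$ via the standard inequality $jA \cd jB \subseteq j(A \cd B)$ that falls out of \eqref{covfu}. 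Axiom (s5) is where centrality of $I$ is used: the elementwise commutation $a \cd y = y \cd a$ for $a \in X \cap I$ lifts through $\up$ by $\cd$-monotonicity and through $j$ again by \eqref{covfu}, yielding $\sh X \fu Y = Y \fu \sh X$.

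For the consumption axioms, (c2), (c3), (c4) reduce at once to reflexivity of $R$, transitivity of $R$, and the axiom $xRy \in 0 \Rightarrow x \in 0$; and (c5) uses the remaining $R$-axiom together with $\bot \subseteq X$: for $x \in 0$ pick $y$ with $xRy \covby \emp$, so $y \in j\emp = \bot \subseteq X$ and $x \in \qu X$. The nontrivial case is (c1), which I would prove by first establishing $(X \ir Y) \cap I \subseteq \qu X \ir \qu Y$ via $R$-monotonicity --- the hypothesis that an $I$-element slides across $R$ is exactly what converts a witness $yRz \in X$ into a witness $a \cd yR a \cd z \in Y$ when $a \in (X \ir Y) \cap I$ --- and then extending through $j\up$ using that $\qu X \ir \qu Y$ is itself a proposition. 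I expect the main obstacle to be the algebra for (s4): all four substantive $I$-axioms (closure under $\cd$, idempotence, $I \subseteq \up\e$, and $\e \covby I$) must be balanced simultaneously against the $\up$ and $j$ closures in one calculation, while the $R$-side analogue for (c1) is routine once $R$-monotonicity is correctly routed through residuation.
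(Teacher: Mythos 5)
Your proposal is correct and follows essentially the same route as the paper: the same three-part decomposition (closure of $\Prop(\S)$ under $\sh$ and $\qu$, then (s1)--(s5), then (c1)--(c5)), with each hypothesis of Definition \ref{def3} deployed exactly where the paper deploys it. The only difference is packaging: for (s4), (s5) and (c1) you establish elementwise inclusions such as $X\cap Y\cap I\sub(X\cap I)\cd(Y\cap I)\sub(X\cap Y)\cap I$ and $(X\ir Y)\cap I\sub \qu X\ir\qu Y$ and then push them through $j\up$ via the nucleus inequality $j\up A\cd j\up B\sub j\up(A\cd B)$, where the paper instead chases covers inline using \eqref{covfu} --- the two are interchangeable and rest on the same facts.
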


\begin{proof}
By Theorem 1, $(\Prop(\S), \sub,\fu,1,\lr,\rr,0)$ is a complete FL-algebra. That it is closed under $\sh$ follows because $j\up Y$ is a proposition for any $Y\sub S$, hence in particular $\sh X=j\up(X\cap I)\in\Prop(\S)$. Confluence of $\pre$ and $R$ ensures that $\di{R}X$ is an up-set if $X$ is, while Modal Localisation  ensures that $\di{R}X$ is localised if $X$ is
\cite[p1047]{gold:cove11}. Thus $Prop(\S)$ is closed under the modality $\qu$.

We verify the axioms of Definitions \ref{def1} and \ref{def2} for any propositions $X,Y\in\Prop(\S)$.
\begin{enumerate}
\item [(s1):]
If $x\in\sh X$, then for some $C$, $x\covby C\sub \up(X\cap I)\sub \up X =X$ (as $X$ is an up-set). So $x\covby C\sub X$, showing $x\in jX\sub X$ (as $X$ is localised).  This proves $\sh X\sub X$.
\item [(s2):]
Since $X\cap I\sub j\up(X\cap I)=\sh X$, we get
$j\up(X\cap I)\sub j\up(\sh X\cap I)$, i.e.\ $\sh X\sub\sh\sh X$.
\item [(s3):]
We want $\sh 1=1$, i.e.\ $j\up(\up\e\cap I)=\up\e$. Since $I\sub\up\e$, this simplifies to $j\up I=\up\e$.
Now from  $I\sub\up\e$ we get $j\up I\sub  j\up\up\e=j\up\e\sub \up\e$ (since $\up\e$ is a proposition).
For the converse inclusion, as $I$ is an $\e$-cover, $\e\covby I\sub\up I$, hence $\e\in j\up I$ and so $\up\e\sub j\up I$ as $j\up I$ is an up-set.
\item [(s4):]
We want  $\sh(X\cap Y)=\sh X\fu\sh Y$, i.e.\ $j\up(X\cap Y\cap I)= j\up(\sh X\cd\sh Y)$.

Now if $x\in \sh(X\cap Y)$, then  $x\covby C\sub \up(X\cap Y\cap I)$ for some $C$. If $c$ is any member of $C$, then there is some $d$ with $c\suc d\in  X\cap Y\cap I\sub\sh X\cap\sh Y$. But then $d\cd d\in \sh X\cd\sh Y$, so as $d\suc d\cd d$ by idempotence, this leads to $c\in \up(\sh X\cd\sh Y)$. Altogether this shows that
$x\covby C\sub\up(\sh X\cd\sh Y)$, hence $x\in j\up(\sh X\cd\sh Y)=\sh X\fu\sh Y$ as required.

For the converse inclusion it suffices to show that $\sh X\cd\sh Y\sub \sh(X\cap Y)$, since this forces 
$\sh X\fu\sh Y= j\up(\sh X\cd\sh Y)\sub  \sh(X\cap Y)$, because
$\sh(X\cap Y)$ is a proposition and $j\up(\sh X\cd\sh Y)$ is the least proposition incuding
$\sh X\cd\sh Y$. So suppose $x\cd y\in \sh X\cd\sh Y$, where $x\in \sh X$ and $y\in\sh Y$.
Then $x\covby C_x\sub \up(X\cap I)$ and $y\covby C_y\sub \up(Y\cap I)$ for some $C_x$ and $C_y$.
Hence $x\cd y\covby C_x\cd C_y$ by the strong form \eqref{covfu} of preservation of covering by fusion. Now take any element $c\cd c'$ of $C_x\cd C_y$.
Then there exist $d,d'$ with $c\suc d\in X\cap I$ and $c'\suc d'\in Y\cap I$. 
Thus $d\cd d'\in X\cd I\sub X\cd\up\e  \sub X\fu\up\e=X$.
Similarly $d\cd d'\in I\cd Y\sub Y$. Therefore
$c\cd c'\suc d\cd d'\in X\cap Y$. Moreover, $d\cd d'\in I$ as $I$ is closed under fusion. It follows that
$c\cd c'\in\up(X\cap Y\cap I)$.
Altogether we now have $x\cd y\covby C_x\cd C_y\sub\up(X\cap Y\cap I)$, so 
$x\cd y \in j\up(X\cap Y\cap I)=\sh(X\cap Y)$ as required.

\item [(s5):]
We want  $\sh X\fu Y=Y\fu\sh X$. First, to show $\sh X\fu Y\sub Y\fu\sh X$ it suffices to show that $\sh X\cd Y\sub Y\fu\sh X$, since $\sh X\fu Y$ is the least proposition including $\sh X\cd Y$. So take $x\in\sh X$ and $y\in Y$. Then $x\covby C\sub\up(X\cap I)$ for some $C$. Now for any $c\in C$, there is some $c'\in X\cap I$ with $c\suc c'$. Then
 $c\cd y\suc c'\cd y=y\cd c'$, with the last equality holding because $c'\in I$ and $I$ is central. But $y\cd c'\in Y\cd(X\cap I)\sub Y\cd\sh X $, so $c\cd y\in\up(Y\cd\sh X )$. This proves that  $C\cd y\sub \up(Y\cd\sh X )$. But $x\cd y\covby C\cd y$, as fusion preserves covering, so then $x\cd y\in j\up(Y\cd\sh X )=Y\fu\sh X$.
 
 That completes the proof that  $\sh X\cd Y\sub Y\fu\sh X$, and hence that $\sh X\fu Y\sub Y\fu\sh X$. The proof of the converse inclusion  $Y\fu\sh X \sub\sh X\fu Y $ is similar.
 
 \item [(c1):]
 We want  $\sh (X\ir Y) \sub\qu X\ir\qu Y$ for $i=l,r$. Taking the case of $i=l$, let $x\in \sh (X\lr Y)$, so that 
 $x\covby C\sub\up((X\lr Y)\cap I)$ for some $C$. Then $x\cd y\covby C\cd y$ as fusion preserves covering.
 To show that $x\in \qu X\lr\qu Y$, take any $y\in\qu X$: we must then prove that $x\cd y\in\qu Y$.
 We have $yRz\in X$ for some $z$. 
 For each $c\in C$ there exists $d\in (X\lr Y)\cap I$ with $c\suc d$, hence $c\cd y\suc d\cd y$. Since $d\in I$, by $R$-monotonicity this implies $d\cd yRd\cd z\in (X\lr Y)\cd X\sub Y$,  so $c\cd y\in \up\di{R}Y=\qu Y$.
 Altogether this shows that $x\cd y\covby C\cd y\sub\qu Y$, so $x\cd y\in j\qu Y=\qu Y$, as $\qu Y$  is a proposition.
 
 The case of $i=r$ is similar, using the facts that that $x\covby C$ implies $y\cd x\covby y\cd C$, and $yRz$ implies 
 $y\cd dRz\cd d$ when $d\in I$.
 
\item [(c2):]
That $X\sub\qu X$ follows because $R$ is reflexive.
  
 \item [(c3):]
That $\qu\qu X\sub\qu X$ follows because $R$ is transitive.

 \item [(c4):]
That $\qu 0\sub 0$ corresponds exactly to the $\S$-condition that $xRy\in 0$ implies $x\in 0$.

\item [(c5):]
To show $ 0\sub \qu X$, let $x\in 0$. Then for some $y$, $xRy\covby\emp\sub X$. Now $y\in jX= X$ as $X$ is a proposition, hence $x\in\di{R}X=\qu X$. 
\qed
\end{enumerate}
\end{proof}
It is notable that the only part of idempotence of $I$ that was used was the condition $d\suc d\cd d$ for $d\in I$ in the proof of (s4). The reverse inequality $d\pre d\cd d$  holds independently of idempotence, for if $d\in I$ then $\e\pre d$, so $d=d\cd\e\pre d\cd d$. Thus if $\pre$ is a partial order (i.e.also anti-symmetric) it is enough to require that $d\pre d\cd d$ in order to have $d=d\cd d$ for $d\in I$.

Note also that we made no of use the requirement that $I$ contains $\e$ in the above result. But we will see that any modal FL-algebra is representable as an algebra of propositions based on a cover system that does have  $\e\in I$ and $\pre$ anti-symmetric.

\section{Representation of Modal FL-Algebras}   \label{sec6}

Let $\L=(L,\sqleq,\fu,1,\lr,\rr)$ be an order-complete residuated lattice. Define a structure $\S^\L=(S,\pre,\covby,\cd,\e )$ by putting $S=L$; $x\pre y$ iff $y\sqleq x$; $x\covby C$ iff $x\sqleq\join C$; $x\cd y=x\fu y$; and $\e=1$.  Then $\S^\L$ is a residuated cover system, and moreover is one in which the monoid operation $\fu$ on $\Prop(\S^\L)$ has 
$X\fu Y=X\circ Y=\up(X\cd Y)$. A cover system will be called \emph{strong} if it satisfies this last condition $X\fu Y=X\circ Y$.

Note that in $\S^\L$, the up-set $\up x=\{y:x\pre y\}$ is equal to $\{y:y\sqleq x\}$, which is the \emph{down-set} of $x$ in $(L,\sqleq)$. In fact the propositions of $\S^\L$ are precisely these up-sets: if $X\in \Prop(\S^\L)$, then $X=\up x$ where 
$x=\join X$.  The map $x\mapsto \up x$ is order-invariant, in the sense that $x\sqleq y$ iff $\up x\sub\up y$, and is an isomorphism between the complete posets $(L,\sqleq)$ and $(\Prop(\S^\L),\sub)$, preserving all joins and meets. It also has 
$\up(x{\fu} y) = (\up x)\circ(\up y)$ and $\up(1)=\up\e$, and so is an isomorphism between $\L$ and the complete residuated lattice $\Prop(\S^\L)$ as described in Theorem \ref{propS}.

In this way we see that every order-complete residuated lattice  is isomorphic to the full algebra of all propositions of some residuation cover system. The proofs of these claims about the relationship between $\L$ and $\S^\L$ are set out in detail in \cite[Section 3]{gold:grish11}.

Now suppose $\L$ is a modal FL-algebra $(L,\sqleq,\fu,1,\lr,\rr,0,\sh,\qu)$. Expand the  above  residuated cover system $\S^\L$ to a structure $(S,\pre,\covby,\cd,\e, 0^\L,I,R )$ by adding the definitions
\begin{align*}
0^\L &= \up 0 = \{x:x\sqleq 0\},
\\
I &= \{\sh x:x\in L\},
\\
R &=\{(x,y): x\sqleq \qu y\}.
\end{align*}

\begin{lemma}
$\S^\L$ is a modal FL-cover system in which, for all $x\in L$, 
\begin{align}
\up(\sh x) &=j\up((\up x)\cap I), \quad \text{and}   \label{upsh}
\\
\up(\qu x) &=\di{R}(\up x).  \label{upqu}
\end{align}
\end{lemma}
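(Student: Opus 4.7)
The plan is to unpack each clause of Definition \ref{def3} in the concrete setting of $\S^\L$ and reduce it to the corresponding modal FL-algebra axiom, using throughout the translations $y \pre x$ iff $x \sqleq y$, $x \covby C$ iff $x \sqleq \join C$, $\cd = \fu$, and $\up a = \{y : y \sqleq a\}$ being the principal down-set of $a$ in $L$. Since $(S, \pre, \covby, \cd, \e)$ is already known to be a strong residuated cover system from \citep{gold:grish11}, only the new modal data $0^\L$, $I$, $R$ need to be handled.

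First I would dispatch the conditions on $I$: $I \sub \up\e$ is Lemma \ref{lem1}(1); $\e = \sh 1 \in I$ comes from (s3); closure of $I$ under $\cd$ is (s4); $\e \covby I$ holds because $1 \in I$; idempotence inside $I$ is Lemma \ref{lem1}(3); and centrality is exactly (s5). Then for $R$: reflexivity is (c2); transitivity combines (c3) with monotonicity of $\qu$ (Lemma \ref{lem2}); confluence of $\pre$ and $R$ is trivial (take $w = z$); the clause $xRy \in 0^\L \Rightarrow x \in 0^\L$ uses (c4); and $x \in 0^\L \Rightarrow xR\bot \covby \emp$ uses (c5) with $a = \bot$. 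For Modal Localisation, given $x \covby C \sub \di R X$, each $c \in C$ supplies a witness $f(c) \in X$ with $c R f(c)$, so setting $y = \join\{f(c) : c \in C\}$ and $D = \{f(c) : c \in C\}$ gives $y \covby D \sub X$, while monotonicity of $\qu$ yields $x \sqleq \join C \sqleq \join_{c} \qu f(c) \sqleq \qu y$, hence $xRy$.

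The one place needing a small maneuver---and which I expect to be the main obstacle---is $R$-monotonicity. Given $x = \sh a \in I$ and $yRz$, i.e.\ $y \sqleq \qu z$, one wants $\sh a \fu y \sqleq \qu(\sh a \fu z)$. Axiom (c6) reads $\sh b \fu \qu z \sqleq \qu(b \fu z)$ but with $b$ on the right, not $\sh b$; the fix is to apply it with $b = \sh a$ and invoke the identity $\sh \sh a = \sh a$ (immediate from (s1), (s2) and monotonicity of $\sh$), yielding $\sh a \fu \qu z \sqleq \qu(\sh a \fu z)$, whence $\sh a \fu y \sqleq \sh a \fu \qu z \sqleq \qu(\sh a \fu z)$ by $\fu$-monotonicity. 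The right-hand case uses (c7) symmetrically.

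Finally, for the two identities: (\ref{upqu}) is immediate, since $y \sqleq \qu x$ is witnessed in $\di R(\up x)$ by $z = x$, and conversely any $z \sqleq x$ with $y \sqleq \qu z$ gives $y \sqleq \qu x$ by monotonicity of $\qu$. For (\ref{upsh}), the $\supseteq$ direction uses the one-element cover $C = \{\sh x\}$, which covers any $y \sqleq \sh x$ and lies in $(\up x) \cap I$ by (s1). For $\sub$, any $y$ in the right-hand set has $y \sqleq \join C$ for some $C$ whose elements are each dominated by an $\sh a$ with $\sh a \sqleq x$; monotonicity and idempotence of $\sh$ upgrade $\sh a \sqleq x$ to $\sh a = \sh \sh a \sqleq \sh x$, so $\join C \sqleq \sh x$ and hence $y \in \up(\sh x)$.
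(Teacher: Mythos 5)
Your proposal is correct and takes essentially the same route as the paper: each clause of Definition \ref{def3} is reduced to the corresponding algebraic axiom under the translations $x\covby C$ iff $x\sqleq\join C$, etc., with the same join-of-witnesses argument for Modal Localisation and the same (s2)/(c6) manoeuvre for $R$-monotonicity. The only cosmetic differences are that the paper establishes \eqref{upsh} via the auxiliary identity $\sh x=\join\up((\up x)\cap I)$ rather than your singleton cover $\{\sh x\}$, and obtains closure of $I$ under fusion from Lemma \ref{lem1}(4) rather than directly from (s4); both variants are sound.
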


\begin{proof}
We will apply the axioms for $\L$ given in Definitions \ref{def1} and \ref{def2} and the properties derived in Lemmas \ref{lem1} and \ref{lem2}. Observe that $0^\L$ belongs to $\Prop(\S^\L)$ because it is of the form $\up x$; and that $I\sub\up\e$ because in general $\sh x\sqleq 1=\e$, so $\e\pre\sh x$. We show that $\S^\L$ fulfills the list of conditions of Definition \ref{def3}.

\begin{itemize}
\item 
$I$ is a submonoid of $(S,\cd,\e)$:
the equation $\sh x\fu\sh y=\sh (\sh x\fu\sh y)$ ensures that $I$ is closed under $\cd$, while $\e=1=\sh1$ 
ensures that $\e\in I$.
\item
$I$ is an $\e$-cover:
 $\e\in I$ implies that $\e\sqleq\join I$, showing $\e\covby I$.
 \item
 $I$ is idempotent because $\sh x\fu\sh x=\sh x$; and central because $\sh x\fu y=y\fu\sh x$ (s5).
 \item
Confluence of$\pre$ and $R$: if $x\pre y$ and $xRz$, then $y\sqleq x\sqleq\qu z$, so $yRz$. Putting $w=z$ gives 
 $z\pre w$ and $yRw$.
\item
Modal Localisation: 
suppose there is a $C$ with $x\covby C\sub\di{R}X$. Then $x\sqleq\join  C$.
Let $C'=\{z\in X: \exists c\in C\,(cRz)\}$ and put $y=\bscup C'$. Then $C'$ is a $y$-cover included in $X$, and so it remains only to show that $xR y$. But if $c\in C$, then  by supposition there exists $z$ with $cRz\in X$. Then $c\sqleq \qu z$ and $z\in C'$, so $z\sqleq y$, hence $\qu z\sqleq \qu y$ as $\qu$ is monotone, and thus $c\sqleq \qu y$. Therefore $x\sqleq\join C\sqleq \qu y$, giving $xRy$ as required.
 \item
 $R$ is reflexive as $x\sqleq\qu x$ (c2) gives $xRx$. $R$ is transitive because if $xRyRz$, then $x\sqleq \qu y$ and
 $y\sqleq \qu z$, hence $\qu y\sqleq \qu\qu z\sqleq\qu z$ (c3), so $x\sqleq \qu z$, i.e.\ $xRz$.
 \item
 \emph{$R$-Monotonicity}: If $x\in I$ and $yRz$, then $x=\sh w$ for some $w$ and $y\sqleq \qu z$. Using (s2), 
 $\sqleq$-monotonicity of $\fu$ and then (c6), we reason that 
 $$
 x\cd y =\sh w\fu y\sqleq\sh\sh w\fu\qu z\sqleq\qu(\sh w\fu z)=\qu(x \cd z),
 $$
 so $x\cd yRx\cd z$. The proof that $y\cd xRz\cd x$ is similar, using (c7).
 \item
 If $xRy\in 0^\L$, then $x\sqleq \qu y$ and $y\sqleq 0$. Hence $\qu y\sqleq\qu 0\sqleq 0$ (c4), implying $x\sqleq 0$ and hence $x\in 0^\L$.
 \item
 Let $x\in 0^\L$. To show $\exists y:xRy\covby\emp$, put
  $y=\join\emp$. Then indeed $y\covby \emp$, and since $x\sqleq 0$ and $0\sqleq\qu y$  (c5), we have $xRy$ as required.
\end{itemize}
That completes the proof that  $\S^\L$ is a modal FL-cover system. To prove \eqref{upsh}, we first show
\begin{equation}\label{shjoin}
\sh x=\join\up(\up x\cap I).
\end{equation}
For, since $\sh x\sqleq x$ (s1) we have $\sh x\in\up x\cap I$, so $\sh x\sqleq \join\up(\up x\cap I)$.  But conversely, for any
 $y\in \up(\up x\cap I)$, there is some $\sh z\in \up x\cap I$ such that $y\suc\sh z$, hence $y\sqleq\sh z\sqleq x$.
 Then  $y\sqleq\sh z \sqleq\sh \sh z\sqleq\sh x$. Thus $\sh x$ is an upper bound of $\up(\up x\cap I)$, implying 
 $\join \up(\up x\cap I)\sqleq \sh x$ and proving \eqref{shjoin}.  Now for any $y\in S$ we reason that
 
$y\in j\up((\up x)\cap I)$

iff there is a $C$ with $y\covby C\sub \up((\up x)\cap I)$

iff there is a $C\sub  \up((\up x)\cap I)$ with $y\sqleq\join C$

iff $y\sqleq\join \up((\up x)\cap I)$

iff $y\sqleq\sh x$ by \eqref{shjoin}

iff $y\in\up(\sh x)$.

\noindent
That proves \eqref{upsh}. For \eqref{upqu}, if $y\in\up(\qu x)$, then $y\sqleq\qu x$ and so $yRx\in\up x$, showing $y\in \di{R}\up x$. Conversely, if $yRz\sqleq x$ for some $z$, then $y\sqleq\qu z\sqleq\qu x$, implying $y\in\up(\qu x)$.
\qed
\end{proof}
Results \eqref{upsh} and \eqref{upqu} state that the map $x\mapsto\up x$ preserves the modalities of the modal FL-algebras $\L$ and $\Prop(\S^\L)$. Thus we have altogether established

\begin{theorem}
Any  order-complete modal FL-algebra is isomorphic to  the modal FL-algebra of  all propositions of some modal FL-cover system that is strong, i.e.\  satisfies $X\fu Y=\up(X\cd Y)$.
\qed
\end{theorem}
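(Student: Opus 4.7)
The plan is to use the structure $\S^\L$ constructed in the preceding Lemma as the witnessing modal FL-cover system, and to verify that the assignment $\eta\colon x\mapsto \up x$ from $L$ to $\Prop(\S^\L)$ is the required isomorphism. By that Lemma, $\S^\L$ is a modal FL-cover system, so by the previous theorem $\Prop(\S^\L)$ is itself a complete modal FL-algebra under the operations defined there. The strongness condition $X\fu Y=\up(X\cd Y)$ is one of the properties of $\S^\L$ already recorded in the opening paragraph of Section~\ref{sec6}, so that clause of the statement is automatic.

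Next I would invoke the non-modal representation result of \cite[Section~3]{gold:grish11}, summarised just before the preceding Lemma, which for any order-complete residuated lattice states that $\eta$ is an order-invariant bijection onto $\Prop(\S^\L)$ (every proposition has the form $\up x$ for $x=\join X$), that it preserves all existing joins and meets, and that it satisfies $\up(x\fu y)=(\up x)\circ(\up y)$ together with $\up(1)=\up\e$. Those facts together give that $\eta$ is an isomorphism of the residuated-lattice reducts of $\L$ and $\Prop(\S^\L)$; in particular the two residuals are preserved, either by direct calculation or by uniqueness of residuals.

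It then remains only to check that $\eta$ respects the three operations specific to the modal FL-algebra signature, namely the constant $0$ and the unary modalities $\sh$ and $\qu$. Preservation of the constant is direct from the definition $0^\L=\up 0=\eta(0)$. Preservation of the modalities is exactly the content of equations \eqref{upsh} and \eqref{upqu} of the preceding Lemma, whose right-hand sides $j\up((\up x)\cap I)$ and $\di{R}(\up x)$ are by construction the values of the $\Prop(\S^\L)$-operations $\sh$ and $\qu$ at $\eta(x)$. Combining these ingredients delivers the required isomorphism. I do not foresee any genuine obstacle at this stage: the substantive work of verifying the modal FL-cover system axioms and of computing the images of $\sh x$ and $\qu x$ under $\eta$ has been carried out in the preceding Lemma, and the residuated-lattice reduct is handled by the earlier representation, so this theorem is essentially the synthesis of those two components.
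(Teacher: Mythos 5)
Your proposal is correct and follows essentially the same route as the paper: the theorem is obtained by combining the earlier representation of order-complete residuated lattices via $x\mapsto\up x$ over the strong system $\S^\L$ with the preceding Lemma, whose equations \eqref{upsh} and \eqref{upqu} (together with the trivial preservation of $0$) show that this map also respects $\sh$ and $\qu$. Nothing further is needed.
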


Combining this with Ono's result, mentioned at the end of Section \ref{sec2}, on completions of modal FL-algebras, we have

\begin{theorem}
Any  modal FL-algebra is isomorphically embeddable into  the modal FL-algebra of  propositions of some strong modal FL-cover system, by an embedding that preserves all existing joins and meets.
\qed
\end{theorem}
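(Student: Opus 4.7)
The proof is a direct composition of two ingredients already in hand. First I would invoke Ono's completion result from \citep{ono:sema93}, mentioned at the end of Section \ref{sec2}: given an arbitrary modal FL-algebra $\L$, there exists an embedding $h\colon \L \hookrightarrow \L^*$ into an order-complete modal FL-algebra $\L^*$, such that $h$ preserves all existing joins and meets in $\L$ (this is the MacNeille-style completion extended to the modal signature). Second, the preceding theorem gives an isomorphism $k\colon \L^*\xrightarrow{\cong}\Prop(\S^{\L^*})$ for the strong modal FL-cover system $\S^{\L^*}$ constructed from $\L^*$ in Section \ref{sec6}.

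The plan is then to take the composite $k\circ h\colon \L \to \Prop(\S^{\L^*})$. Since $h$ is a modal FL-algebra embedding and $k$ is a modal FL-algebra isomorphism, the composite is an embedding of modal FL-algebras. Preservation of existing joins and meets follows because $h$ has this property by Ono's theorem and $k$, being an isomorphism of \emph{complete} lattices, preserves \emph{all} joins and meets; hence the composition preserves any join or meet that happens to exist in $\L$.

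I do not expect any genuine obstacle here, since every nontrivial step has already been supplied: the residuated lattice and modal clauses of the embedding come from Ono's completion, and the cover-system representation comes from the immediately preceding theorem. The only thing worth checking carefully, were one to write this out in full, is that Ono's completion as cited indeed covers the full modal FL-signature (both $\sh$ and $\qu$) and indeed preserves all existing joins and meets (not merely finite ones); this is precisely what is asserted at the end of Section \ref{sec2}, so the argument reduces to an appeal plus the observation that the composition of embeddings remains an embedding.
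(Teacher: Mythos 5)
Your proposal is correct and matches the paper's own argument exactly: the theorem is obtained by composing Ono's MacNeille-style completion (cited at the end of Section \ref{sec2} as preserving all existing joins and meets) with the immediately preceding representation theorem for order-complete modal FL-algebras. The paper gives no further detail, so your composition $k\circ h$ is precisely the intended proof.
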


\section{Kripke-Type Semantics}  \label{sec7}

\citet{ono:sema93} defined a modal substructural propositional logic that is sound and complete for validity in models on modal FL-algebras, as well as a first-order extension of this logic that is characterised by suitable models on modal FL-algebras. In view of the representation theorems just  obtained, these models can be taken to based on algebras of the form   $\Prop(\S)$, where $\S$ is a strong modal FL-cover system. In any such model $\M$, each sentence $\ph$ is interpreted as a proposition $\abm{\ph}\in\Prop(\S)$, with propositional constants and connectives interpreted by the operations of Theorem \ref{propS}; quantifiers interpreted using the join and meet operations $\join$ and $\meet$ of that Theorem; and modalities interpreted by $\sh X=j\up(X\cap I)$ and $\qu X=\di{R}X$. Writing $\M,x\models\ph$ to mean that $x\in\abm{\ph}$, and unravelling the definitions of the operations on $\Prop(\S)$, results in a Kripke-style satisfaction relation between formulas and points in models on cover systems. We now briefly present the inductive clauses specifying such a satisfaction relation in models for a certain type of first-order language.

Let $\LL$ be a signature, comprising a collection of individual constants $c$, and predicate symbols $P$ with specified arities $n<\omega$.  The \emph{$\LL$-terms}  are the individual constants $c\in\LL$ and the individual variables $v$ from some fixed denumerable list of such variables. An \emph{atomic $\LL$-formula} is any expression $P\tau_1\cdots \tau_n$ where $P\in\LL$ is $n$-ary, and the $\tau_i$ are $\LL$-terms.
The set of all \emph{$\LL$-formulas} is generated from the atomic $\LL$-formulas  and constant formulas $\top$, $\bot$,  
$\so$, $\sz$, using  the propositional connectives
$\land$ , $\lor$,  $\li$ and $\ri$ (interpreted as $\sqcap$, $\sqcup$, $\lr$ and $\rr$);   the quantifiers $\forall v$, $\exists v$ for all variables $v$; and the modalities $\sh$ and $\qu$.

An \emph{$\LL$-model}  $\M=(\S,U,\abm{-})$ has $\S=(S,\pre,\covby,\cd,\e, 0^\S,I,R )$  a modal FL-cover system, $U$  a non-empty set (universe of individuals), and $\abm{-}$  an interpretation function assigning
\begin{itemize}
\item
to each individual constant $c\in\LL$ an element $\abm{c}\in U$; \enspace and
\item 
to each $n$-ary predicate symbol $P\in\LL$, a function $\abm{P}:U^n\to\Prop(\S)$.
\end{itemize}
Intuitively, $\abm{P}({u_1},\dots,{u_n})$ is the proposition asserting that the predicate $P$ holds of the $n$-tuple of individuals $({u_1},\dots,{u_n})$. 

Let $\LL^U$ be the extension of $\LL$ to include the members of $U$ as individual constants. $\M$ automatically extends to an $\LL^U$-model by putting $\abm{c}=c$ for all $c\in U$. $\M$ has a truth/satisfaction relation $\M,x\models\ph$ between elements $x\in S$ and \emph{sentences} $\ph$ of $\LL^U$, with associated \emph{truth-sets}
$
\abm{\ph}=\{x\in S:\M,x\models\ph\}.
$
These notions are defined by induction on the length of $\ph$, as follows.

\begin{center}
\begin{tabular}{lc@{\quad }l}
$\M,x\models Pc_{1}\cdots c_{n}$ &iff & $x\in \abm{P}( \abm{c_1},\dots,\abm{c_n})$
\\
$\M,x\models \top$ &    &
\\
$\M,x\models \bot$ & iff   &$x\covby\emp$
\\
$\M,x\models \so$ & iff   &$\e\pre x$
\\
$\M,x\models \sz$ & iff   &$x\in 0^\S$
\\
$\M,x\models \ph\land\psi$ & iff &    $\M,x\models \ph$ and $ \M,x \models \psi$
\\
$\M,x\models\ph\lor\psi$     & iff & there is an $x$-cover $C\sub\abm{\ph}\cup\abm{\psi}$
\\
$\M,x\models \ph\li\psi$ & iff &  $ \M,y\models \ph$  implies $\M,x\cd y\models \psi$
\\
$\M,x\models \ph\ri\psi$ & iff &  $ \M,y\models \ph$  implies $\M,y\cd x\models \psi$
\\
$\M,x\models \forall v\ph$	 & iff & for all $c\in U$, $\M,x\models \ph(c/v) $ 
\\
$\M,x\models \exists v\ph$	 & iff & there is an $x$-cover $C\sub\bcup_{c\in U}\abm{\ph(c/v)}$
\\
$\M,x\models \sh\ph$ & iff   & there is an $x$-cover $C\sub\up(\abm{\ph}\cap I)$
\\
$\M,x\models \qu\ph$ & iff   & for some $y$, $xRy$ and $\M,y\models\ph$.
\end{tabular}
\end{center}
A sentence $\ph$ is   \emph{true in model $\M$} if it is true at every point, i.e.\  if $\M,x\models\ph$ for all $x\in S$, or equivalently $\abm{\ph}=S$.  A formula $\ph$ with free variables is \emph{true in} $\M$ if every $\LL^U$-\emph{sentence} $\ph(c_1/v_1,\dots,c_n/v_n)$ that is a substitution instance of $\ph$ is true in $\M$.

Detailed discussion of this kind of cover system semantics, and associated completeness theorems axiomatising their valid sentences, are presented in \citep{gold:krip06} for the logic of non-modal FL-algebras; in \citep{gold:cove11} for intuitionistic modal first-order logics; in \citep[Chapter 6]{gold:quan11} for propositional and quantified relevant logics; and in \citep{gold:grish11} for a `classical' version of bilinear logic that we also discuss below in Section \ref{secgrish}.

\section{Negation and Orthogonality}
Any FL-algebra has two unary `negation-like' operations,  $\lmin$ and $\rmin$, defined by putting $\lmin a=a\lr 0$ and 
$\rmin a=a\rr 0$. In the algebra $\Prop(\S)$ of a residuated cover system with a distinguished proposition 0, these operations can be analysed by the  `orthogonality' relation  $\perp$ on $S$ defined by
\begin{equation} \label{defperp}
z\perp y  \iff  z\cd y\in 0.
\end{equation}
Writing $z\perp X$ when $z\perp y$ for all $y\in X$,
and
$X\perp z$ when  $y\perp z$ for all $y\in X$, we get that $z\perp X$ iff $z\cd X\sub 0$ and $X\perp z$ iff $X\cd z\sub 0$, so

\begin{equation}  \label{perplrneg}
\lmin X =\{z\in S: z\perp X\} \quad \text{and}\quad \rmin X =\{z\in S: X\perp z\}.
\end{equation}

The operations $\lmin$ and $\rmin$  interpret left and right negation connectives, defined by taking $\lneg\ph$ to be $\ph\li\sz$ and $\rneg\ph$ to be $\ph\ri\sz$. These have the semantics

\begin{align} \label{semneg}
\begin{split}
&\M,x\models \lneg\ph \quad\text{iff}\quad x\perp\abm{\ph}
\\
&\M,x\models \rneg\ph \quad\text{iff}\quad \abm{\ph}\perp x.
\end{split}
\end{align}

When $\cd$ is commutative, the relation $\perp$ is symmetric and $\lmin$ and $\rmin$ are identical. But even in the absence of symmetry we do have $z\perp\e$ iff $\e\perp z$ iff $z\in 0$. So $0$ itself is recoverable from $\perp$ as the set 
$\{z:z\perp\e\}=\{z:\e\perp z\}$.

Now if $y\in\up\e$, then in general $z\cd\e\pre z\cd y$, so $z\perp\e$ implies $z\perp y$ as 0 is an up-set. This shows that
$z\perp\e$ iff $z\perp\up\e$ iff $z\in\lmin\up\e$. Similarly, $\e\perp z$ iff $\up\e\perp z$ iff $z\in\rmin\up\e$. Thus in $\Prop(\S)$ we have
$
0=\lmin 1=\rmin 1.
$
When $\S$ is a modal FL-cover system, we also have
\begin{equation}
0=\lmin I=\rmin I.
\end{equation}
To see why, note that since $I\sub\up\e$, we have $0=\lmin\up\e\sub\lmin I$. But if $z\in\lmin I$, then $z\perp I$, so $z\perp\e$ as $\e\in I$, hence $z\in 0$. This shows that $0=\lmin I$. The proof that $0=\rmin I$ is similar.

The relation $\perp$ defined in \eqref{defperp} has the following properties:
\begin{itemize}
\item 
$z\perp y$    iff   $z\cd y\perp\e$.
\item
\emph{Orthogonality to $\e$ is monotonic}:  $y\suc z\perp\e$ implies $y\perp\e$.
\item
\emph{Orthogonality to $\e$ is local}:  $x\covby C\perp\e$ implies $x\perp\e$.
\end{itemize}
Vice versa, if we begin with a residuated cover system $\S$ having a binary relation $\perp$ with these properties, then it follows that $\{z:z\perp\e\}=\{z:\e\perp z\}$, and that this set belongs to $\Prop(\S)$. So we can take it as the \emph{definition} of 0. Then the sets $\lmin X$ and $\rmin X$ defined from $\perp$ as in \eqref{perplrneg} turn out to be $X\lr 0$ and $X\rr 0$ for this choice of 0, respectively.

The modelling of negation by an orthogonality relation as in \eqref{semneg} first occurred in \citep{gold:sema74}, with $\perp$ symmetric. The idea of defining $\perp$ from a distinguished subset of a monoid as in \eqref{defperp} is due to \citet{gira:line87}.

We will make use of  some basic properties of $\lmin$ and $\rmin$ in any FL-algebra (see e.g.\ \citep[Section 2.2]{gala:resi07}):
\begin{itemize}
\item 
$a\sqleq\lmin\rmin a$ and $a\sqleq\rmin\lmin a$.
\item
$a\sqleq b$ implies $\imin b\sqleq\imin a$ for $i=l,r$ \quad(antitonicity).
\item
$\lmin 1=0=\rmin 1$.
\item
$1\sqleq\lmin 0\sqcap\rmin 0$.
\item
$a\rr b\sqleq  \rmin b\lr\rmin a$.
\item
$a\lr b\sqleq  \lmin b\rr\lmin a$.
\end{itemize}
For instance, the last `contrapositive' inequality is the  case $c=0$ of
\begin{equation}\label{contra}
a\lr b\sqleq   (b\lr c)\rr  (a\lr c),
\end{equation}
which is itself shown by using residuation to reason  that
$$
(b\lr c)\fu(a\lr b)\fu a\sqleq (b\lr c)\fu b\sqleq c,
$$
implying that $(b\lr c)\fu(a\lr b) \sqleq (a\lr c)$, from which \eqref{contra} follows.

Now in Boolean modal algebra, a modality $\sh$ has the dual modality  $\minus\sh \minus$, where  $\minus$ is the Boolean complement/negation operation. Given the two negations $\lmin$ and $\rmin$ we would seem to have four possiblities here for defining a term function to which $\qu$ is  dual. But it turns out that they are all the same:

\begin{lemma} 
Any FL-algebra with a storage modality $\sh$ satisfies $\lmin\sh a=\rmin\sh a$ and  $\sh\lmin a=\sh\rmin a$, for all $a$. Hence
$$
\lmin\sh\rmin a=\rmin\sh\lmin a=\lmin\sh\lmin a=\rmin\sh\rmin a.
$$
\end{lemma}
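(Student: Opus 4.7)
The proof will split naturally into two independent identities, after which the four-way equality is immediate book-keeping.

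First I would prove $\lmin\sh a=\rmin\sh a$. The argument is a one-line residuation chase exploiting the centrality axiom (s5) applied to $\sh a$: for any element $z$,
\[
z\sqleq \lmin\sh a \iff z\fu\sh a\sqleq 0 \iff \sh a\fu z\sqleq 0 \iff z\sqleq \rmin\sh a,
\]
where the middle equivalence is (s5). Since the two residuals have the same set of lower bounds, they are equal.

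Next I would prove $\sh\lmin a=\sh\rmin a$. Here the key observation is that although $\lmin a$ need not commute with $a$, its storage $\sh\lmin a$ does, by (s5). Starting from $\lmin a\fu a\sqleq 0$ (the defining property of $\lmin a$) and using $\sh\lmin a\sqleq\lmin a$ from (s1), I get $\sh\lmin a\fu a\sqleq 0$. Applying (s5) to the left factor turns this into $a\fu\sh\lmin a\sqleq 0$, i.e.\ $\sh\lmin a\sqleq\rmin a$. Monotonicity of $\sh$ (Lemma~\ref{lem1}(2)) then gives $\sh\sh\lmin a\sqleq\sh\rmin a$, and (s2) closes the gap: $\sh\lmin a\sqleq\sh\sh\lmin a\sqleq\sh\rmin a$. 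The reverse inclusion is symmetric, starting from $a\fu\rmin a\sqleq 0$ and using (s5) in the other direction.

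Finally, the displayed four-way identity follows by routine substitution. Applying the first identity with $a$ replaced by $\lmin a$ yields $\lmin\sh\lmin a=\rmin\sh\lmin a$, and with $a$ replaced by $\rmin a$ yields $\lmin\sh\rmin a=\rmin\sh\rmin a$. Applying $\lmin$ (resp.\ $\rmin$) to both sides of the second identity $\sh\lmin a=\sh\rmin a$ gives $\lmin\sh\lmin a=\lmin\sh\rmin a$, linking the two pairs together. I expect no real obstacle; the only subtle point is remembering that (s5) must be combined with (s1) \emph{and} (s2) in the second identity, because the bare inequality $\sh\lmin a\sqleq\rmin a$ is only good enough to yield $\sh\lmin a\sqleq\sh\rmin a$ after re-storing both sides.
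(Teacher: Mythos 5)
Your proposal is correct and follows essentially the same route as the paper: the first identity by an (s5)-plus-residuation argument (the paper instantiates the two inequalities directly rather than phrasing it as an equivalence chain over all $z$, but the content is identical), and the second identity by exactly the paper's combination of (s1), (s5), residuation, $\sh$-monotonicity and (s2). The derivation of the four-way equality is also sound; the paper leaves that step as "follows from the first," and your explicit substitutions fill it in correctly.
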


\begin{proof}
By (s5) and then residuation, $(\sh a\rr 0)\fu\sh a=\sh a\fu (\sh a\rr 0)\sqleq 0$, implying that
$(\sh a\rr 0)\sqleq (\sh a\lr 0)$. Similarly $\sh a\fu (\sh a\lr 0)=(\sh a\lr 0)\fu\sh a\sqleq 0$, implying
$(\sh a\lr 0)\sqleq (\sh a\rr 0)$. Hence $(\sh a\lr 0)= (\sh a\rr 0)$, i.e.\  $\lmin\sh a=\rmin\sh a$.

Next, by (s5) and (s1),  $a\fu\sh(a\lr 0)=\sh(a\lr 0)\fu a\sqleq (a\lr 0)\fu a\sqleq 0$, so
$\sh(a\lr 0)\sqleq a\rr 0$. Hence by $\sh$-monotonicity and (s2), $\sh(a\lr 0)\sqleq \sh(a\rr 0)$.
The reverse inequality holds similarly, so $\sh(a\lr 0)= \sh(a\rr 0)$, i.e.\ $\sh\lmin a=\sh\rmin a$. 

The second statement of the Lemma follows from the first.
\qed
\end{proof}

Thus we can define an operation $\qus$ by writing $\qus a$ for the element $\lmin\sh\rmin a$, or any of its  three other manifestations as given by this last result.

\begin{theorem}
If\/ $\sh$ is a storage modality on an FL-algebra $\L$, then the operation $\qus$ satisfies  the axioms (c1)--(c5) and so, together with $\sh$, makes $\L$ into a modal FL-algebra.
\end{theorem}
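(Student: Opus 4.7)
The plan is to verify each axiom (c1)--(c5) by a short chain of inequalities built from the list of FL-algebra identities collected just before the theorem (the involutive-like bounds $a\sqleq\lmin\rmin a$ and $a\sqleq\rmin\lmin a$, antitonicity of $\lmin$ and $\rmin$, $\lmin 1=0=\rmin 1$, $1\sqleq\lmin 0\sqcap\rmin 0$, and the two contrapositive laws), combined with Definition \ref{def1} and Lemma \ref{lem1}. The crucial flexibility is that the previous lemma gives four equal forms for $\qus a$, and I would choose whichever form is most convenient at each step. I also note that $\sh$ is idempotent: $\sh\sh a=\sh a$, since (s1) gives $\sh\sh a\sqleq\sh a$ and (s2) gives the reverse.

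I would dispose of the three ``short'' axioms first, using the form $\qus a=\lmin\sh\rmin a$. For (c2), combine $\sh\rmin a\sqleq\rmin a$ from (s1) with antitonicity of $\lmin$ to obtain $\lmin\rmin a\sqleq \lmin\sh\rmin a=\qus a$, and then $a\sqleq\lmin\rmin a$ finishes. For (c5), use Lemma \ref{lem1}(1) to get $\sh\rmin a\sqleq 1$, hence $0=\lmin 1\sqleq \qus a$. For (c4), from $1\sqleq\rmin 0$ and monotonicity of $\sh$ with (s3) we get $1=\sh 1\sqleq\sh\rmin 0$, so antitonicity of $\lmin$ yields $\qus 0=\lmin\sh\rmin 0\sqleq\lmin 1=0$.

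Axiom (c3) is the place where idempotence of $\sh$ plays its only role. Keeping $\qus a=\lmin\sh\rmin a$, expand $\qus\qus a=\lmin\sh\rmin\lmin\sh\rmin a$. Apply the general identity $b\sqleq\rmin\lmin b$ to $b=\sh\rmin a$ and then $\sh$-monotonicity to get $\sh\sh\rmin a\sqleq \sh\rmin\lmin\sh\rmin a$; idempotence collapses the left side to $\sh\rmin a$, so one final application of antitonicity of $\lmin$ yields $\qus\qus a\sqleq\qus a$.

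The main obstacle will be (c1), which I would handle by a double contrapositive sandwiching Lemma \ref{lem1}(5). For $i=l$, begin with $a\lr b\sqleq \lmin b\rr \lmin a$; monotonicity of $\sh$ followed by the $i=r$ case of Lemma \ref{lem1}(5) gives
\[
\sh(a\lr b)\;\sqleq\;\sh(\lmin b\rr\lmin a)\;\sqleq\;\sh\lmin b\rr\sh\lmin a.
\]
Then the contrapositive $x\rr y\sqleq\rmin y\lr\rmin x$ delivers
\[
\sh\lmin b\rr\sh\lmin a\;\sqleq\;\rmin\sh\lmin a\lr\rmin\sh\lmin b\;=\;\qus a\lr \qus b,
\]
using the form $\qus c=\rmin\sh\lmin c$. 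Symmetrically, for $i=r$ I would start from $a\rr b\sqleq\rmin b\lr\rmin a$, apply Lemma \ref{lem1}(5) with $i=l$, and then the contrapositive $x\lr y\sqleq\lmin y\rr\lmin x$, finishing in the form $\qus c=\lmin\sh\rmin c$. The only subtlety is to line up the two contrapositives with the two forms of $\qus$ so that the residuals come out on the correct side.
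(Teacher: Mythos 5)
Your proposal is correct and follows essentially the same route as the paper's proof: each of (c1)--(c5) is verified by the same chain of inequalities, with (c1) obtained by sandwiching Lemma \ref{lem1}(5) between the two contrapositive laws and switching between the forms $\lmin\sh\rmin$ and $\rmin\sh\lmin$ of $\qus$. The only cosmetic difference is in (c2), where you use antitonicity together with $a\sqleq\lmin\rmin a$ instead of the paper's one-step residuation from $\sh(a\rr 0)\sqleq a\rr 0$; both are equally valid.
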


\begin{proof}\strut
\begin{enumerate}[\rm(c1)]
\item
From the contrapositve inequality $a\lr b\sqleq  \lmin b\rr\lmin a$,  by $\sh$-monotonicity and Lemma \ref{lem1}(5) we get
$\sh(a\lr b)\sqleq  \sh\lmin b\rr\sh\lmin a$. But
$
  \sh\lmin b\rr\sh\lmin a\sqleq \rmin\sh\lmin a \lr \rmin  \sh\lmin b = \qus a\lr\qus b,
  $
so  $\sh(a\lr b)\sqleq   \qus a\lr\qus b$.

  Similarly we show that
$\sh(a\rr b)\sqleq   \lmin\sh\rmin a \rr \lmin  \sh\rmin b = \qus a\rr\qus b$.
\item 
Since by (s1) $\sh(a\rr 0)\sqleq a\rr 0$, residuation gives $a\sqleq \sh(a\rr 0)\lr 0= \lmin\sh\rmin a=\qus a$.
\item
We have $\sh\rmin a\sqleq \rmin\lmin \sh\rmin a$, as an instance of $b\sqleq\rmin\lmin b$.
Hence  by $\sh$-monotonicity and (s2), $\sh\rmin a\sqleq \sh\rmin\lmin \sh\rmin a$. This together with antitonicity gives
$\lmin\sh\rmin\lmin \sh\rmin a \sqleq\lmin \sh\rmin a$, which says $\qus\qus a\sqleq\qus a$.
\item
$1=\sh 1\sqleq\sh\rmin 0$, hence $\lmin \sh\rmin 0 \sqleq\lmin 1=0$. This says $\qus 0\sqleq 0$.
\item
By Lemma \ref{lem1}(1), we have $\sh\rmin a\sqleq 1$. Hence $0=\lmin 1\sqleq \lmin \sh\rmin a =\qus a$.
\qed
\end{enumerate}

\end{proof}

\section{Classical/ Grishin Algebras} \label{secgrish}

\citet{ono:sema93} defined an FL-algebra to be \emph{classical} if it satisfies the equations
$$
(a\rr0)\lr 0 = a = (a\lr0)\rr 0.
$$
This can be written as   $\lmin\rmin a =a=\rmin\lmin a$, and will be called the law of \emph{double-negation elimination}.
Girard's linear logic is modelled by classical FL-algebras in which the fusion operation $\fu$ is commutative.

\citet{lamb:some95} defined a \emph{Grishin algebra} to be a lattice-ordered pomonoid that has two unary operations $\lmin$ and $\rmin$ and a distinguished element 0 that satisfies double-negation elimination and the conditions
$$
a\sqleq b \iff a\fu\rmin b\sqleq 0 \iff \lmin b\fu a\sqleq 0.
$$
He described such algebras as being ``a generalisation of Boolean algebras which do not obey Gentzen's three structural rules''. His motivation was to study algebraic models for \emph{classical bilinear propositional logic}, described as ``a non-commutative version of linear logic which allows two negations''. Such models were first considered by 
\cite{grish:gene83}.

 Lambek showed that a Grishin algebra can be equivalently defined as a residuated lattice with two operations 
$\lmin$ and $\rmin$ satisfying double-negation elimination and
$$
\lmin 1=\rmin 1, \quad a\lr b=\lmin(a\fu\rmin  b), \quad a\rr b=\rmin(\lmin b\fu a).   
$$
A proof that the notions of classical FL-algebra and Grishin algebra are equivalent is given in \cite[Theorem 2.2]{gold:grish11}.

A residuated cover system $\S$ will be called \emph{classical} if it has a distinguished proposition (localised up-set) 0 such that
the least proposition  containing any given $X$ is equal to both $\lmin\rmin X$ and $\rmin\lmin X$.
In other words,
\begin{equation} \label{classical}
j\up X = \lmin\rmin X=\rmin\lmin X
\end{equation}
holds for all $X\sub S$, where $\lmin$ and $\rmin$ are defined from $\lr$ and $\rr$ using 0. This is equivalent to requiring that $\Prop(\S)$ be a Grishin algebra/classical FL-algebra, and is also equivalent to the requirement that \eqref{classical} holds just  for all up-sets $X$ \citep[Theorem 4.2]{gold:grish11}.

We showed in \citep{gold:grish11} that every Grishin algebra has an isomorphic embedding into the algebra of all propositions of some strong classical residuated cover system, by a map that preserves all existing joins and meets. The method, involving MacNeille completion, can be combined with the constructions of this paper to give a representation of any classical modal FL-algebra as an algebra of propositions of some strong classical modal FL-cover system.

In conclusion we relate our constructions back to the modelling of consumption modalities in \citep{gira:line95}, which is  based on the notion of a phase space  as a commutative monoid with a distinguished subset (but without a preorder). Suppose $\S$ is a \emph{classical} modal FL-cover system in which $\cd$ is commutative. Then the relation $\perp$ defined in \eqref{defperp} is symmetric, and so  the sets $\lmin X$ and $\rmin X$ in \eqref{perplrneg} are one and the same. We denote this set by $X^\perp$. The operation $X\mapsto X^{\perp\perp}$ is a closure operator on the powerset of $S$ that has $X^{\perp\perp\perp}=X^\perp$. Moreover, $j\up X=X^{\perp\perp}$ according to \eqref{classical}. Now the modality $\qus$ on $\Prop(\S)$ is given by
$$
\qus X= (\sh X^\perp)^\perp = (j\up (X^\perp\cap I))^\perp =  (X^\perp\cap I)^{\perp\perp\perp}
=(X^\perp\cap I)^\perp.
$$
$(X^\perp\cap I)^\perp$ is  Girard's  definition of $\qu X$ when $I$ is the set of idempotents belonging to $1=\{\e\}^{\perp\perp}$.

%\bibliography{goldblatt} 

\begin{thebibliography}{23}
\providecommand{\natexlab}[1]{#1}
\providecommand{\url}[1]{\texttt{#1}}
\expandafter\ifx\csname urlstyle\endcsname\relax
  \providecommand{\doi}[1]{doi: #1}\else
  \providecommand{\doi}{doi: \begingroup \urlstyle{rm}\Url}\fi

\bibitem[Bell(2005)]{bell:cove06}
John~L. Bell.
\newblock Cover schemes, frame-valued sets and their potential uses in
  spacetime physics.
\newblock In Albert Reimer, editor, \emph{Spacetime Physics Research Trends,
  Horizons in World Physics}, volume 248. Nova Science Publishers, 2005.
\newblock Manuscript at \url{http://publish.uwo.ca/~jbell}.

\bibitem[Bezhanishvili and Holliday(2016)]{bezh:loca16}
Guram Bezhanishvili and Wesley~H. Holliday.
\newblock Locales, nuclei, and {D}ragalin frames.
\newblock In Lev Beklemishev, St\'{e}phane Demri, and Andr\'{a}s M\'{a}t\'{e},
  editors, \emph{Advances in Modal Logic, Volume 11}, pages 177--196. College
  Publications, 2016.

\bibitem[Bucalo(1994)]{buca:moda94}
Anna Bucalo.
\newblock Modalities in linear logic weaker than the exponential ``of course'':
  Algebraic and relational semantics.
\newblock \emph{Journal of Logic, Language and Informtation}, 3:\penalty0
  211--232, 1994.

\bibitem[Dr\'{a}galin(1988)]{drag:math88}
A.~G. Dr\'{a}galin.
\newblock \emph{Mathematical Intuitionism: Introduction to Proof Theory}.
\newblock American Mathematical Society, 1988.
\newblock Translation of the 1979 Russian original by Elliott Mendelson.

\bibitem[Galatos et~al.(2007)Galatos, Jipsen, Kowalski, and Ono]{gala:resi07}
Nikolaos Galatos, Peter Jipsen, Tomasz Kowalski, and Hiroakira Ono.
\newblock \emph{Residuated Lattices : An Algebraic Glimpse at Substructural
  Logics}, volume 151 of \emph{Studies in Logic and the Foundations of
  Mathematics}.
\newblock Elsevier, 2007.

\bibitem[Girard(1987)]{gira:line87}
Jean-Yves Girard.
\newblock Linear logic.
\newblock \emph{Theoretical Computer Science}, 50:\penalty0 1--102, 1987.

\bibitem[Girard(1995)]{gira:line95}
Jean-Yves Girard.
\newblock Linear logic: Its syntax and semanticss.
\newblock In J.-Y. Girard, Y.~Lafont, and L.~Reignier, editors, \emph{Advances
  in Linear Logic}, number 222 in London Math. Soc. Lecture Note Series, pages
  1--42. Cambridge University Press, 1995.

\bibitem[Goldblatt(1974)]{gold:sema74}
Robert Goldblatt.
\newblock Semantic analysis of orthologic.
\newblock \emph{Journal of Philosophical Logic}, 3:\penalty0 19--35, 1974.
\newblock Reprinted in \cite{gold:math93}.

\bibitem[Goldblatt(1993)]{gold:math93}
Robert Goldblatt.
\newblock \emph{Mathematics of Modality}.
\newblock CSLI Lecture Notes No. 43. CSLI Publications, Stanford University,
  1993.

\bibitem[Goldblatt(2006)]{gold:krip06}
Robert Goldblatt.
\newblock A {K}ripke-{J}oyal semantics for noncommutative logic in quantales.
\newblock In Guido Governatori, Ian Hodkinson, and Yde Venema, editors,
  \emph{Advances in Modal Logic, Volume 6}, pages 209--225. College
  Publications, London, 2006.
\newblock \url{www.aiml.net/volumes/volume6/}.

\bibitem[Goldblatt(2011{\natexlab{a}})]{gold:cove11}
Robert Goldblatt.
\newblock Cover semantics for quantified lax logic.
\newblock \emph{Journal of Logic and Computation}, 21\penalty0 (6):\penalty0
  1035--1063, 2011{\natexlab{a}}.
\newblock \url{doi: 10.1093/logcom/exq029}.

\bibitem[Goldblatt(2011{\natexlab{b}})]{gold:grish11}
Robert Goldblatt.
\newblock Grishin algebras and cover systems for classical bilinear logics.
\newblock \emph{Studia Logica}, 99\penalty0 (1--3):\penalty0 203--227,
  2011{\natexlab{b}}.

\bibitem[Goldblatt(2011{\natexlab{c}})]{gold:quan11}
Robert Goldblatt.
\newblock \emph{Quantifiers, Propositions and Identity: Admissible Semantics
  for Quantified Modal and Substructural Logics}.
\newblock Number~38 in Lecture Notes in Logic. Cambridge University Press and
  the Association for Symbolic Logic, 2011{\natexlab{c}}.

\bibitem[Grishin(1983)]{grish:gene83}
V.~N. Grishin.
\newblock On a generalisation of the {A}jdukiewicz-{L}ambek system.
\newblock In A.~I. Mikhailov, editor, \emph{Studies in Non-Classical Logics and
  Formal Systems}, pages 315--334. Nauka, Moscow, 1983.
\newblock English translation in V. M. Abrusci and C. Casadio (eds.), \emph{New
  Perspectives in Logic and Formal Linguistics}, Proceedings 5th Roma Workshop,
  Bulzoni Editore, Rome, 2002. Corrected version available as pages 1--17 in
  \url{http://symcg.pbworks.com/f/essllinotesnew.pdf}.

\bibitem[Lafont(1997)]{lafo:fini97}
Yves Lafont.
\newblock The finite model property for various fragments of linear logic.
\newblock \emph{The Journal of Symbolic Logic}, 62\penalty0 (4):\penalty0
  1202--1208, 1997.

\bibitem[Lambek(1995)]{lamb:some95}
J.~Lambek.
\newblock Some lattice models of bilinear logic.
\newblock \emph{Algebra Universalis}, 34:\penalty0 541--550, 1995.

\bibitem[Mac~Lane and Moerdijk(1992)]{macl:shea92}
Saunders Mac~Lane and Ieke Moerdijk.
\newblock \emph{Sheaves in Geometry and Logic: A First Introduction to Topos
  Theory}.
\newblock Springer-Verlag, 1992.

\bibitem[Okada and Terui(1999)]{okad:fini99}
Mitsuhiro Okada and Kazushige Terui.
\newblock The finite model property for various fragments of intuitionistic
  linear logic.
\newblock \emph{The Journal of Symbolic Logic}, 64\penalty0 (2):\penalty0
  790--802, 1999.

\bibitem[Ono(1985)]{ono:sema85}
Hiroakira Ono.
\newblock Semantic analysis of predicate logics without the contraction rule.
\newblock \emph{Studia Logica}, 44\penalty0 (2):\penalty0 187--196, 1985.

\bibitem[Ono(1993)]{ono:sema93}
Hiroakira Ono.
\newblock Semantics for substructural logics.
\newblock In Peter Schroeder-Heister and Kosta Do{\v s}en, editors,
  \emph{Substructural Logics}, pages 259--291. Oxford University Press, 1993.

\bibitem[Ono and Komori(1985)]{ono:logi85}
Hiroakira Ono and Yuichi Komori.
\newblock Logics without the contraction rule.
\newblock \emph{The Journal of Symbolic Logic}, 50\penalty0 (1):\penalty0
  169--201, 1985.

\bibitem[Sambin(1989)]{samb:intu89}
Giovanni Sambin.
\newblock Intuitionistic formal spaces and their neighbourhood.
\newblock In R.~Ferro, C.~Bonotto, S.~Valentini, and A.~Zanardo, editors,
  \emph{Logic Colloquium '88}, pages 261--285. North-Holland, 1989.

\bibitem[Troelstra(1992)]{troe:lect92}
A.~S. Troelstra.
\newblock \emph{Lectures on Linear Logic}.
\newblock CSLI Lecture Notes No.~29. CSLI Publications, Stanford, California,
  1992.

\end{thebibliography}

\end{document}